\newcommand{\lar}{\longrightarrow}
\newcommand{\surjects}{\twoheadrightarrow}
\newtheorem{Theorem}{Theorem}[section]
\newtheorem{Lemma}[Theorem]{Lemma}
\newtheorem{Corollary}[Theorem]{Corollary}
\newtheorem{Proposition}[Theorem]{Proposition}
\theoremstyle{definition}
\newtheorem{Remark}[Theorem]{Remark}
\newtheorem{Example}[Theorem]{Example}
\newtheorem{Conjecture}[Theorem]{Conjecture}
\newtheorem{Definition}[Theorem]{Definition}
\def\sqr#1#2{{\vcenter{\hrule height.#2pt
			\hbox{\vrule width.#2pt height#1pt \kern#1pt
				\vrule width.#2pt}
			\hrule height.#2pt}}}
\def\phi{\varphi}
\def\VaVa{{\mathcal V}\kern-5pt {\mathcal V}}
\def\gr#1#2{{\rm gr}\, _{#1}(#2)}
\def\gr{{\rm gr}\,}
\def\depth{{\rm depth}\,}
\def\Min{{\rm Min}\,}
\def\codim{{\rm codim}\,}
\def\ker{{\rm ker}\,}
\def\grade{{\rm grade}\,}
\def\rk{\rm rank}
\def\sym#1#2{\mbox{\rm Sym}_{#1}(#2)}
\def\Ext#1#2#3#4{{\rm Ext}\,^{#1}_{#2}({#3},{#4})}
\def\supp#1{{\rm Supp}\, (#1)}
\def\ini{\mbox{\rm in}}
\def\sym{{\mathrm{Sym}}}
\def\cl#1{{\mathcal #1}}
\def\phi{\varphi}
\def\grade{{\rm grade}\,}
\def\QQ{{\bf Q}}
\def\fm{{\mathfrak m}}
\def\NN{\mathbb N}
\def\fp{{\mathfrak p}}
\def\fm{{\mathfrak m}}
\def\NN{\mathbb N}
\def\cl#1{{\cal #1}}
\def\rk{\rm rank}
\newcommand{\excise}[1]{}
\def\NZQ{\mathbb}               
\def\NN{{\NZQ N}}
\def\QQ{{\NZQ Q}}
\def\CC{{\NZQ C}}
\def\G{{\mathcal G}}
\def\opn#1#2{\def#1{\operatorname{#2}}} 
\opn\chara{char} \opn\length{\lambda} \opn\pd{pd} \opn\rk{rk}
\opn\projdim{proj\,dim} \opn\injdim{inj\,dim} \opn\rank{rank}
\opn\depth{depth} \opn\grade{grade} \opn\height{height}
\opn\embdim{emb\,dim} \opn\codim{codim}
\opn\Tr{Tr} \opn\bigrank{big\,rank}
\opn\superheight{superheight}\opn\lcm{lcm}
\opn\trdeg{tr\,deg}
	\opn\reg{reg} \opn\lreg{lreg} \opn\ini{in} \opn\lpd{lpd}
	\opn\size{size} \opn\sdepth{sdepth}
	\opn\link{link}\opn\fdepth{fdepth}\opn\lex{lex}
	\opn\tr{tr}
	\opn\type{type}
	\opn\div{div} \opn\Div{Div} \opn\cl{cl} \opn\Cl{Cl}
	\opn\Spec{Spec} \opn\Supp{Supp} \opn\supp{supp} \opn\Sing{Sing}
	\opn\Ass{Ass} \opn\Min{Min}\opn\Mon{Mon}
	\opn\Ho{H}
	\opn\Ann{Ann} \opn\Rad{Rad} \opn\Soc{Soc}
	\opn\Im{Im} \opn\Ker{Ker} \opn\Coker{Coker} \opn\Am{Am}
	\opn\Hom{Hom} \opn\Tor{Tor} \opn\Ext{Ext} \opn\End{End}
	\opn\Aut{Aut} \opn\id{id}
	\opn\nat{nat}
	\opn\pff{pf}
	\opn\Pf{Pf} \opn\GL{GL} \opn\SL{SL} \opn\mod{mod} \opn\ord{ord}
	\opn\Gin{Gin} \opn\Hilb{Hilb}\opn\sort{sort}
	\opn\PF{PF}\opn\Ap{Ap}
	\opn\aff{aff} \opn
\opn\relint{relint} \opn\st{st}
	\opn\lk{lk} \opn\cn{cn} \opn\core{core} \opn\vol{vol}  \opn\inp{inp} \opn\nilpot{nilpot}
	\opn\link{link} \opn\star{star}\opn\lex{lex}\opn\set{set}
	\opn\width{wd}
	\opn\Fr{F}
	\opn\QF{QF}
	\opn\G{G}
	\opn\type{type}\opn\res{res}
	\opn\log{Log}
	\opn\gr{gr}
	\def\pot#1#2{#1[\kern-0.28ex[#2]\kern-0.28ex]}
	\opn\dirlim{\underrightarrow{\lim}}
	\opn\inivlim{\underleftarrow{\lim}}
\begin{document}
		
		\title[Quasihomogeneous isolated singularities]{Quasihomogeneous isolated singularities in terms of syzygies and foliations}
		
		\author{Hamid  Hassanzadeh}
		\address{Departamento de Matemática, Centro de Tecnologia,  Universidade Federal do Rio de Janeiro, 21941-909 Rio de Janeiro, RJ, Brazil}
		\email{hamid@im.ufrj.br}
		\thanks{The first author was partially supported by a grant from CAPES (Brazil) (Finance Code 001. FAPERJ 2025, APQ1).}

		\author{Abbas Nasrollah Nejad}
		\address{Department of Mathematics, Institute for Advanced Studies in Basic Sciences (IASBS), Zanjan 45137-66731, Iran}
		\email{abbasnn@iasbs.ac.ir}

\author{Aron Simis} 
\address{Departamento de Matemática, Universidade Federal da Pernambuco, Recife, Pernambuco, 50740-560, Brazil,}
\email{aron.simis@ufpe.br}
\thanks{The third author was partially supported by a grant from CNPq (Brazil) (304800/2024-4).}

	\subjclass[2010]{Primary 13A30, 13D02, 13D07, 14B05; Secondary 32S05, 32S25, 37F75}  	
		
		\keywords{Isolated hypersurface singularity, Milnor number, Tjurina number, syzygies, logarithmic derivations}

		\begin{abstract}
			One considers quasihomogeneous isolated singularities of hypersurfaces in arbitrary dimensions through the lenses of three apparently quite apart themes: syzygies, singularity invariants, and foliations. In the first of these, one adds to the well-known result of Saito's  a syzygy-theoretic characterization of a quasihomogeneous singularity affording an effective computational criterion. 
			In the second theme,  one explores the Milnor–Tjurina difference number from a commutative algebra viewpoint. Building on the Brian\c{c}on–Skoda theorem and exponent, we extend previously known inequalities by Dimca and Greuel to arbitrary dimension and provide algebraic formulas involving the syzygy-theoretic part and reduction exponents.
			In the last theme one recovers and bring up to an algebraic light a result of Camacho and Movasati by establishing a couple of characterizations of quasihomogeneous isolated singularities in terms of the generators of the module of invariant vector fields. 
		\end{abstract}
		\maketitle
		
		
		\section{Introduction}
		
	Given an integer $n\geq 2$, let $R$ denote  the ring $\pot{k}{x_1,\ldots,x_n}$ of formal power series over a field $k$ of characteristic zero.
	 Let $f\in R$ define an isolated hypersurface singularity at the null point.
	 The main drive of this work is to elucidate further aspects of quasihomogeneous isolated singularities and bring out some of their applications related to  the Milnor--Tjurina number difference and to logarithmic derivations.



In order to describe the contents of the sections we recall some of the main notions employed in the paper.
Assume throughout that $f\in R=\pot{k}{x_1,\ldots,x_n}$ defines an isolated hypersurface singularity at the null point.

First and foremost, the gradient ideal $J_f:=\langle\partial f/\partial x_1,\ldots,\partial f/\partial x_n\rangle$ and the Jacobian ideal $I_f:=\langle f,J_f\rangle$ -- quite often referred to respectively as the Jacobian ideal and the Tjurina ideal (\cite[Definition 2.1]{GLS}), with respective {\em Milnor algebra} $M_f:=R/J_f$ and {\em Tjurina algebra} $T_f:=R/I_f$.
Since $f$ defines an isolated hypersurface singularity at the null point, then both ideals are primary to the maximal ideal generated by $\{x_1,\ldots,x_n\}$.

 The first of these ideals takes us naturally to the nonnegative integer
\begin{equation}
    \mu_f = \dim_{k} \frac{R}  {\langle\frac{\partial f}{\partial x_1}, \dots, \frac{\partial f}{\partial x_n}\rangle}.
\end{equation}
 
This integer  is called the {\it Milnor number} of the isolated singularity defined by $f$ because of its interpretation as (or tight relationship to) the designation $\mu$ given by Milnor (\cite[Chapter 7]{Milnor}) to a number measuring the amount of {\it degeneracy} at an isolated critical point $p$ of a polynomial $f$ -- here, $p$ is said to be {\it non-degenerate} if the Hessian matrix of $f$ at $p$ is non-singular -- or yet, as  the multiplicity of $p$ as ``solution'' to the collection of polynomial equations $\partial f/\partial x_1= \cdots =\partial f/\partial x_n=0$.
 
 In the case where $k$ is the complex field, $\mu_f$ has a strong topological facet in terms of the Milnor fibration, implying for instance that each fiber has the homotopy type of a bouquet (wedge sum) of  $\mu_f $ spheres of dimension $ n-1 $.


The Jacobian ideal above takes us to the nonnegative integer
\begin{equation}
    \tau_f = \dim_{k} \frac{R} {\langle f,  \frac{\partial f}{\partial x_1}, \dots, \frac{\partial f}{\partial x_n}\rangle},
\end{equation}
 measuring the complexity of the isolated singularity from an algebraic/deformation theoretic viewpoint.
It is called the {\it Tjurina number} of the singularity, a designation given by Gert-Martin Greuel as a homage to Galina Tjurina (deceased).
In a certain sense, it measures the dimension of the space of first-order deformations of the singularity.

Clearly, $\tau_f \leq \mu_f $ as a consequence of the natural ideal theoretic inclusion $J_f\subset I_f$.
Thus, one is naturally led to understanding the obstruction to the equality $\tau_f =\mu_f $.
The problem of understanting the difference $\mu_f-\tau_f$ has appealed to several authors (see, e.g., \cite{Almiron}, \cite{BayerHefez}, \cite{Liu},  \cite{Watari}), with varied partial results.








	As observed by Teissier quite early  in~\cite{Teissier}, the notion of integral closure of an ideal plays a significant role in singularity theory. Recall that the \textit{integral closure} of an ideal $I$ in a ring $R$ is the set $\overline{I}$ of elements $f\in R$ satisfying an equation $f^n+a_{1}f^{n-1}+\ldots+a_{n-1}f+a_n=0$, for certain $a_i\in I^i$, $i=1,\ldots,n$. This set is an overideal of $I$ and  $I$ is said to be \textit{integrally closed} in $R$ if  $I = \overline{I}$. A related notion in this context is that of a {\em reduction} $J \subseteq I$ in the sense that $JI^n =I^{n+1}$, for a sufficiently large $n$. The \textit{reduction number} of $I$ relative to $J$ is the integer
	\[{r}_J(I)= \min \{t \colon \; JI^n=I^{n+1}, \text{ for all } n \geq t \}.\]
	 These two concepts relate to each other in the sense that $f\in\overline{I}$ if and only if $I$ is a reduction of the ideal $(f,I)$. For further properties we refer to~\cite{HunekeSwanson} or \cite[Subsection 7.3.4]{SimisBook}.
	
	
	The following basic result shows that the two ideals $J_f$ and $I_f$ are tightly related.
	
	\begin{Theorem}\label{Tessier} 
		Let $R=\pot{k}{x_1,\ldots,x_n}$ where $k$ is a field of characteristic zero. If $f\in R$  is non-invertible, then it belongs to the integral closure of the ideal $\langle x_1\partial f/\partial x_1,\ldots, x_n\partial f/\partial x_n\rangle$. In particular, $J_f$ is a reduction of $I_f$.  
	\end{Theorem}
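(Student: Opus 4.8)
The plan is to establish the first assertion — that $f$ lies in the integral closure of $I := \langle x_1\,\partial f/\partial x_1,\ldots,x_n\,\partial f/\partial x_n\rangle$ — by means of the valuative criterion for integral closure in a Noetherian domain (\cite[Theorem 6.8.3]{HunekeSwanson}; see also \cite[Subsection 7.3.4]{SimisBook}): since $R$ is a (Noetherian) domain, it suffices to check that $f \in IV$ for every discrete valuation ring $V$ of $\operatorname{Frac}(R)$ containing $R$. First I would dispose of the valuations whose center $\mathfrak{p} := \mathfrak{m}_V \cap R$ is strictly smaller than $\mathfrak{m}$: because $I$ is $\mathfrak{m}$-primary one has $\sqrt{I}=\mathfrak m \not\subseteq\mathfrak p$, hence $I \not\subseteq \mathfrak{p}$, so some generator of $I$ becomes a unit in $V$, whence $IV = V \ni f$ trivially. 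Only the local inclusions $R \hookrightarrow V$ then remain, and by passing to the completion (faithfully flat, with unchanged value group) together with the Cohen structure theorem I may assume $V = k'[[t]]$ for some field $k'$ of characteristic zero; write $v = \operatorname{ord}_t$ for its valuation and $\phi\colon R \to k'[[t]]$ for the (local, continuous) resulting homomorphism.

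The core of the argument is then an Euler-type chain-rule computation. Put $x_i(t) := \phi(x_i)$; since $f$ and the $x_i$ lie in $\mathfrak{m}$ and $\phi$ is local and continuous, $\phi(f) = f(x_1(t),\ldots,x_n(t)) \in t\,k'[[t]]$ and $\phi(\partial f/\partial x_i) = (\partial f/\partial x_i)(x_1(t),\ldots,x_n(t))$, and differentiating gives
\[
\frac{d}{dt}\,\phi(f) \;=\; \sum_{i=1}^{n} \left(\frac{\partial f}{\partial x_i}\right)\!\big(x_1(t),\ldots,x_n(t)\big)\cdot x_i'(t).
\]
Here is where $\operatorname{char} k = 0$ enters: for a nonzero series of positive order the order drops by exactly $1$ under $d/dt$, so $v\big((\phi f)'\big) = v(\phi f) - 1$ (the case $\phi(f) = 0$ being trivial, as $0 \in IV$) and $v(x_i'(t)) = v(x_i(t)) - 1$ whenever $x_i(t) \neq 0$. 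Reading off orders in the displayed identity therefore yields
\[
v(\phi f) - 1 \;\ge\; \min_{1\le i\le n}\Big( v\big((\partial f/\partial x_i)(x(t))\big) + v(x_i(t)) \Big) - 1 \;=\; \min_{1\le i \le n} v\big(\phi(x_i\,\partial f/\partial x_i)\big) - 1,
\]
where indices with $x_i(t) = 0$ contribute $+\infty$ on both sides; cancelling the $-1$ gives $v(\phi f) \ge \min_i v\big(\phi(x_i\,\partial f/\partial x_i)\big)$, i.e. $\phi(f) \in IV$. This proves $f \in \overline{I}$.

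For the last assertion I would simply observe that $I \subseteq J_f \subseteq I_f = J_f + \langle f\rangle$ while $f \in \overline{I} \subseteq \overline{J_f}$; hence $J_f \subseteq I_f \subseteq \overline{J_f}$, and for finitely generated ideals in a Noetherian ring this is precisely the statement that $J_f$ is a reduction of $I_f$.

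The step I expect to cost the most care is the reduction to formal arcs $\phi\colon R \to k'[[t]]$: one has to split off the non-local valuations (immediate here, since $I$ is $\mathfrak{m}$-primary), invoke completeness together with Cohen's theorem to normalize $V$ to $k'[[t]]$, and justify the power-series substitution via continuity of $\phi$. The chain-rule estimate itself is short, but it is essential to notice that it genuinely uses characteristic zero — over a field of characteristic $p$ the arc $x_i(t) = t^p$ already destroys the order-drop, and indeed the conclusion fails there.
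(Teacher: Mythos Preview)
The paper does not give its own proof of this statement: it simply records that a proof ``drawing upon the valuation criteria of integral dependence'' appears as \cite[7.1.5]{HunekeSwanson}. Your approach---valuative criterion plus an Euler/chain-rule order estimate along an arc in $k'[[t]]$---is exactly that route, and the core computation for $\mathfrak m$-centered valuations is carried out correctly.

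There is, however, a genuine gap in your reduction step. You dispose of every valuation whose center $\mathfrak p\subsetneq\mathfrak m$ by asserting that $I=\langle x_1 f_{x_1},\ldots,x_n f_{x_n}\rangle$ is $\mathfrak m$-primary. The theorem is stated for an arbitrary non-invertible $f$, and under that hypothesis $I$ need \emph{not} be $\mathfrak m$-primary: already $f=x_1x_2$ in $k[[x_1,x_2]]$ has an isolated singularity (so $J_f=\mathfrak m$), yet $I=\langle x_1x_2\rangle$ is principal of height~$1$. For the DVR $V=R_{(x_1)}$ the center is $(x_1)\subsetneq\mathfrak m$ and no generator of $I$ becomes a unit in $V$, so your ``$IV=V$'' shortcut fails. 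More importantly, once the center is not $\mathfrak m$ the map $R\to k'[[t]]$ is no longer local, hence not continuous for the $\mathfrak m$-adic versus $t$-adic topologies; the substitution formula $\phi(f)=f(x_1(t),\ldots,x_n(t))$ and the chain-rule identity you invoke then require justification, since two $k$-derivations of $R=k[[x_1,\ldots,x_n]]$ agreeing on the $x_i$ need not agree on all of $R$.

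This is exactly the point you flagged as ``costing the most care,'' and it does cost more than you allowed. One repair is to show that, after completing $V$, the induced map $R\to k'[[t]]$ still satisfies the chain rule (this can be checked, but one must first factor through $\widehat{R_{\mathfrak p}}$ and use Cohen's theorem there); another is to invoke a sharper form of the valuative criterion that allows one to test only against $\mathfrak m$-centered DVRs for a complete local domain. Either way, the sentence ``immediate here, since $I$ is $\mathfrak m$-primary'' must be replaced by an actual argument.
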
	
A proof of this theorem drawing upon the valuation criteria of integral dependence is provided in \cite[7.1.5]{HunekeSwanson}. This result was already known to Lejeune-Jalabert and Teissier (\cite {L-JT74}) in the context of the ring of convergent power series, and to Scheja--Storch (\cite{SchejaStorch}) in the  weaker form that $f$ belongs to $\overline{J_f}$ (but sufficient for the applications in this paper), in the context of formal power series.	

Next, a quick deflection to Hilbert multiplicity.
For a Noetherian local ring  $(A,\fm)$  of dimension $d$ with an infinite residue field $A/\fm$, the \textit{Hilbert--Samuel function} of an $\fm$-primary ideal $I \subset A$ is defined as $\mathrm{HF}_I(n): = \lambda(A/I^n)$,  $n\in \NN$. For sufficiently large $n$, this function agrees with a polynomial of degree $d$ given by

\begin{equation}\label{normalized}
	{P}_I(t)=e_0(I)\binom{t+d-1}{d}-e_1\binom{t+d-2}{d-1}+\cdots+(-1)^de_d,
\end{equation}
where $e_i(I)\in \mathbb{Z}$ are called the \textit{Hilbert Coefficients} of $A$ with respect to $I$. The leading coefficient $e_0(I)$ is the  \textit{multiplicity} of $I$, commonly denoted by $e(I,A)$.  
When $J\subset I$ is a reduction of $I$, one has $e(I,A)=e(J,A)$ \cite[Theorem 14.13]{Matsumura}.  Furthermore, if  $A$ is a Cohen-Macaulay ring and $J$ is generated by a system of parameters then $\lambda(A/J)=e(J,A)$ \cite[Theorem 17.11]{Matsumura}.   

Northcott \cite{Northcott} showed that for an $\fm$-primary ideal in a Cohen-Macaulay local ring $(A,\fm)$,  one has
$$e_1(I)\geq e_0(I)-\lambda(A/I)\geq 0. $$	

In \cite[Theorem 2.1]{Huneke2} Huneke determined when the equality on the left holds. Namely, he has shown that if $e_1(I)= e_0(I)-\lambda(A/I)$ then $e_2=\cdots=e_d=0$,  $\mathrm{HF}_I(n) =P_I(t)$ for all $t\geq 1$ and for any minimal reduction $J\subset I$, $I^2=JI$; and conversely,  if there exists a reduction  $J\subset I$ with $I^2=JI$ then  $e_1(I)= e_0(I)-\lambda(A/I)$.

Finally, quite generally, the module of {\em logarithmic derivations} associated to an element $f\in S=k[x_1,\ldots,x_n]$ is defined as the $S$-module $\mathrm{Derlog}_S(f):=\{\eta\in {\rm Der}_k(S)| \eta (f)\in \langle f \rangle\},$
where ${\rm Der}_k(S)$ stands for the (free) module of derivations of $S$ over $k$.
If $S$ is standard graded,  $f\in S$ is a homogeneous polynomial, and  if ${\rm char}(k)$ does not divide $\mathrm{deg}(f)$ (e.g., if $k$ has characteristic zero),  then  this module has  a splitting structure
\begin{equation}\label{splitting}
	\mathrm{Derlog}_S(f)= \mathrm{Syz}(J_f)\oplus S\theta_E,
\end{equation}
where $\mathrm{Syz}(J_f)\subset S^n$ denotes the module of first syzygies of the partial derivatives of $f$ and $J_f\subset S$ denotes the Jacobian ideal of $f$, with $\theta_E$ standing for  the Euler derivation. This splitting is well known (see, e.g., \cite[Proposition 4.3.8]{SimisBook}).
If $f$ is no longer like so, a different approach may be required.
In this realm we are interested in the residue module
\begin{equation}\label{Derlog_residue}
\mathcal{E}_f: = \mathrm{Derlog}_S(f) / \text{KD},
\end{equation} 
where $\text{KD}$ is the submodule generated by the 
\textit{Koszul derivations}
$$\left\{ f_i \frac{\partial}{\partial x_j} - f_j \frac{\partial}{\partial x_i} \mid 1 \leq i, j \leq n \right\}  
\bigcup \left\{ f \frac{\partial}{\partial x_i} \mid 1 \leq i \leq n \right\}$$ 
where $f_i = \frac{\partial f}{\partial x_i}$.  
This residue module will be called \textit{module of essential derivations}.

So much for general nonsense.
We now briefly describe the contents of the sections. 
	
Section~\ref{quasihomogeneous-all} delves into the property of quasihomogeneity, where the main result is Theorem~\ref{QuasiHomogeneous}. It collects  known results around Saito's theorem Theorem~\ref{Saito_original}, once more in the form of equivalent conditions, and introduces an additional one by focusing on the syzigy matrix of the Jacobian ideal. This additional condition allows for a simple, practical criterion to testing quasihomogeneity --  a fact recently brought up in~\cite{ABDM}.  Moreover, by  drawing upon Theorem~\ref{Tessier} and techniques from multiplicity theory, the theorem provides a new proof of one implication  of Saito's theorem.
Finally, one of the equivalences offers an alternative approach to previous results, which relied on Zariski's structural analysis of the module of derivations for an isolated singularity in a complete ring (see Martsinkovsky~\cite[Proposition 1.1]{Marts}).

A discussion close to parts of this section has been taken up by Dimca and Sticlaru (\cite{DimcaSticlaru}). They start up with a  homogeneous polynomial $f$, then pass to an affine chart by assuming that the hyperplane $x_0=0$ is transversal to $V(f)$. Here, the Euler relation for $f$ is essential in their argument, and in particular it supplies a built-in homogeneity for the singularities (see \cite[Eq. (2.1)]{DimcaSticlaru}). In contrast, the present treatment is non-homogeneous and formal power series minded, where the Euler relation is unavailable.
Here, instead one appeals to Tessier’s criterion (and its consequences) to establish the necessary homogeneity properties of singularities. One hopes this overcome procedure has quite an independent interest in singularity theory.

	 In Section~\ref{Difference}, we study the Milnor to Tjurina difference number from a commutative algebra point of view. 
	 Drawing upon the Brian\c{c}on-Skoda theorem, Y. Liu~\cite{Liu} proved the inequality $\mu_f \leq n \tau_f$.  
	 In the case of a plane curve singularity ($n=2$), Dimca and Greuel (\cite[Theorem 1.1(3)]{DG}) improved this bound, showing that $\mu_f < 2 \tau_f$. They further conjectured that, for curve singularities, the sharper inequality $\mu_f < \frac{4}{3} \tau_f$ should hold. This conjecture has been verified in many cases~\cite{Almiron}.  
	 In Theorem~\ref{bounds}(ii) we extend Dimca--Greuel's result  to  arbitrary dimension $n\geq 2$.   That is, we prove the inequality $\mu_f<n\tau_f$, and even stronger relations applying the  Brian\c{c}on-Skoda exponent. 
	Corollary~\ref{smalldifference} provides reasonable evidence to believe that a smaller Milnor--Tjurina difference number corresponds to a lower Brian\c{c}on--Skoda exponent.	  
	Theorem \ref{DiferMT} establishes the relation 	 $$\mu_f-\tau_f=\lambda(I_f^2/J_fI_f)+\lambda(\delta(I_f)),$$ where  $\delta(I_f)$ is the syzygetic module  $\ker (\sym_R^2(I_f)\surjects I_f^2)$ introduced by Vasconcelos and the third author (\cite{Aron-Wolmer}). 
				  
 In Section~\ref{logarithmic}, we tie the module in (\ref{Derlog_residue}) to the theory of algebraic foliations of plane algebraic curves in characteristic zero.
 The study of such foliations has long captured the attention of important authors  since the pioneering work of Poincar\'e and Dulac. In this paper we are basically interested in a more algebraic--geometric approach leading to the vertent of vector fields. The recent work of Camacho, Movasati, and Hertling~\cite{Camacho_et_al} falls along this line and gave us the opportunity to rounding up some of their main results according to our viewpoint. Now, given a polynomial $f \in k[x,y]$ over a field $k$ of characteristic zero, these authors introduce the module  
 \[
 E_f: \,=\, \{ \,\omega \in \Omega^1_{\mathbb{A}^2_k} \;\mid\; df \wedge \omega = f\alpha \text{ for some } \alpha \in \Omega^2_{\mathbb{A}^2_k}\,\},
 \]  
 which encodes all algebraic foliations in $\mathbb{A}^2_k$ leaving the curve $f=0$ invariant.
 One of their main results is as follows.
 \begin{Theorem}\label{CamachoHossein}\cite[Theorem 1]{Camacho_et_al} Assume that $\mu_f< \infty$. If all the singularities of $f = 0$ are quasihomogeneous then there exists $\omega_{f}\in E_{f}$ such that $fdx,fdy,df,\omega_{f}$ generate the $k[x,y]$-module $E_{f}$.
 \end{Theorem}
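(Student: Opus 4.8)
The plan is to translate the foliation module $E_f$ into a syzygy-theoretic object, reduce the statement to a cyclicity assertion for the module of essential derivations $\mathcal E_f$ of \eqref{Derlog_residue}, and then establish that cyclicity one point at a time on the (finite) singular locus of $f=0$.

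First I would carry out the \emph{translation}. Over $S=k[x,y]$ the assignment $\omega=A\,dx+B\,dy\mapsto\eta_\omega:=-B\,\partial/\partial x+A\,\partial/\partial y$ is an $S$-linear isomorphism $\Omega^1_{\mathbb{A}^2_k}\xrightarrow{\ \sim\ }\mathrm{Der}_k(S)$, and since $df\wedge\omega=(f_xB-f_yA)\,dx\wedge dy$ while $\eta_\omega(f)=-(f_xB-f_yA)$, it restricts to an isomorphism $E_f\cong\mathrm{Derlog}_S(f)$. Under it $f\,dx$ and $f\,dy$ go, up to sign, to $f\,\partial/\partial x$ and $f\,\partial/\partial y$, and $df$ goes to $f_x\,\partial/\partial y-f_y\,\partial/\partial x$; since $n=2$ these are exactly the generators of the submodule $\mathrm{KD}$. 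Hence $E_f/\langle f\,dx,f\,dy,df\rangle\cong\mathcal E_f$, and the theorem becomes the claim that \emph{$\mathcal E_f$ is a cyclic $S$-module}. I would moreover use the further identification $\mathrm{Derlog}_S(f)\cong\mathrm{Syz}_S(f_1,f_2,f)$, $\eta=a_1\,\partial/\partial x_1+a_2\,\partial/\partial x_2\mapsto(a_1,a_2,-g)$ with $\eta(f)=gf$, under which $\mathrm{KD}$ is carried onto the module of Koszul relations of the generating sequence $(f_1,f_2,f)$ of $I_f$; thus $\mathcal E_f\cong H_1(f_1,f_2,f;S)$, the first Koszul homology of that sequence.

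Next I would record \emph{finiteness}: positive–degree Koszul homology is annihilated by the ideal generated by the sequence, so $\mathcal E_f$ is supported on $V(I_f)$, the singular locus of $f=0$; the hypothesis $\mu_f<\infty$ forces $V(J_f)\supseteq V(I_f)$ to be a finite set of closed points, so $\mathcal E_f$ has finite length and $\mathcal E_f\cong\bigoplus_{p\in V(I_f)}(\mathcal E_f)_p$. The heart of the argument is then \emph{local cyclicity at a quasihomogeneous point} $p$. Passing to $\widehat{S_p}$ is a flat base change compatible with the formation of $\mathcal E_f$ (and with formal coordinate changes, which replace $(f_1,f_2,f)$ by an equivalent sequence up to an invertible matrix). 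By hypothesis the germ of $f$ at $p$ is quasihomogeneous, so Saito's theorem (Theorem~\ref{Saito_original}, cf.\ Theorem~\ref{QuasiHomogeneous}) lets me choose formal coordinates making $f$ quasihomogeneous, and the Euler relation then gives $f\in J_f\widehat{S_p}$, i.e.\ $I_f\widehat{S_p}=J_f\widehat{S_p}=\langle f_1,f_2\rangle\widehat{S_p}$. Since $p$ is an isolated critical point, $\langle f_1,f_2\rangle$ is $\mathfrak m_p$-primary, hence a regular sequence in the two-dimensional regular local ring $\widehat{S_p}$, and the long exact Koszul sequence for adjoining $f$ to $(f_1,f_2)$ gives an exact sequence
\[
0\longrightarrow(\mathcal E_f)_p\longrightarrow A\xrightarrow{\ \cdot f\ }A,\qquad A:=\widehat{S_p}/\langle f_1,f_2\rangle,
\]
in which $f\in\langle f_1,f_2\rangle$ makes $\cdot f$ the zero map, so $(\mathcal E_f)_p\cong A=M_{f,p}$, a cyclic $\widehat{S_p}$-module (and incidentally $\dim_k(\mathcal E_f)_p=\mu_{f,p}=\tau_{f,p}$). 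Finally a finite-length module that is locally cyclic is globally cyclic: choosing a generator $m_p$ of each $(\mathcal E_f)_p$ and setting $m=(m_p)_p$, one checks prime by prime that $S\cdot m=\mathcal E_f$; writing $\mathcal E_f=S\cdot\overline{\omega_f}$ with $\omega_f\in E_f$ yields $E_f=\langle f\,dx,f\,dy,df,\omega_f\rangle$ (and $\omega_f=0$ works when $f=0$ is smooth).

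The hard part will be the bundle of compatibilities underlying the first two steps: pinning down the duality between $1$-forms and vector fields, verifying that $\mathrm{KD}$ is exactly the Koszul-relations submodule and that it behaves well under localization and under the \emph{formal} coordinate change supplied by Saito's theorem (so that quasihomogeneity can be invoked one point at a time), and then identifying the local Koszul homology $H_1(f_1,f_2,f;\widehat{S_p})$ with the Milnor algebra. Once these identifications are secured, the finiteness of the support and the local-to-global passage for a finite-length module are routine, and Saito's theorem does the rest. $\hfill\square$
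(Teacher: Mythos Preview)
Your proof is correct and follows essentially the same route as the paper's own argument (Lemma~\ref{E_fisZ_f}, the ``only if'' part of Theorem~\ref{E_fprincipal}(ii), and Corollary~\ref{Corolaryfinal}): identify $E_f\cong\mathrm{Derlog}_S(f)$ and $\mathcal E_f\cong H_1(I_f)$, use $f\in J_f$ at each singular point to see that this Koszul homology is locally the Milnor algebra (the paper phrases this via the mapping-cone decomposition $K(f,f_1,f_2)\simeq K(0,f_1,f_2)$, you via the long exact sequence---same computation), and then pass from local to global cyclicity. The only substantive difference is this last step: the paper invokes Forster's theorem to bound $\nu(\mathcal E_f)$, whereas you argue directly from the finite-length splitting $\mathcal E_f\cong\bigoplus_p(\mathcal E_f)_p$; in the zero-dimensional-support situation at hand these are equivalent, your version being the more elementary.
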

 The authors state that the {\em proof} of this theorem only works for curves with quasihomogeneous singularities, exhibiting as an example the curve $x^{5}+ y^{5}-x^{2}y^{2}=0$, which has a non-quasihomogeneous singularity at the origin, for which the conclusion of the theorem fails.
 
 In the present paper, we bring out the above Koszul derivations, which as usual in the presence of vector fields are thought of as ``trivial derivations''.
 Rewriting $E_f$ in terms of derivations yields back $ \mathrm{Derlog}_S(f)$, and our first result shows that its minimal number of generators modulo the Koszul derivations 
 is encoded in the first Koszul homology module of $I_f$. Our Theorem~\ref{E_fprincipal} and Corollary~\ref{Corolaryfinal}  extend Theorem~\ref{CamachoHossein}  to arbitrary dimension for hypersurfaces with isolated singularities
 In addition, we also prove that Theorem~\ref{CamachoHossein}  is actually an if and only if statement, thus rounding up the complete picture and making the above example one among many.
 
 Once again the results of this section may be confronted with the ones in \cite{DimcaSticlaru}.
 We emphasize that their standing hypothesis having $f$ homogeneous and $x_0=0$ transversat to $V(f)$ is not a mild technicality as homogenizing an affine polynomial $f\in k[x_1,…,x_n]$ with an isolated singularity at the origin need not produce a transverse hyperplane as above, and so their route cannot be applied blindly in the non-homogeneous (or formal) context as considered in this work.

\section{An encore on quasi-homogeneous isolated hypersurface singularities}\label{quasihomogeneous-all}
We start by recalling the landmark  result of Saito concerning the distinguished class of hypersurface singularities called  quasihomogeneous singularities. 

\begin{Definition}\label{quasihomog_def}
A polynomial $f\in k[x_1,\ldots,x_n]$ is called \textit{quasihomogeneous} (or \textit{weighted homogeneous}) if it is satisfies any of the following equivalent conditions:
\begin{enumerate} 
\item[{\rm (i)}] There exist an integer $d\geq 1$, and rational numbers $0< r_i\leq 1/2 (1\leq i\leq n)$,  such the Euler-like  relation holds:
\begin{equation*}\label{quasihomogeneous_as_quasiEuler}
f=\sum_{i=1}^{n}(r_i/d)x_i\partial f/\partial x_i
\end{equation*}
\item[{\rm (ii)}]  There exist  rational numbers $0< r_i\leq 1/2 (1\leq i\leq n)$ such that 
$$f(s^{r_1}x_1,\ldots,s^{r_n}x_n)=sf\langle x_1,\ldots,x_n\rangle,$$
\noindent  where $s$ is an indeterminate over $R$.

\item[{\rm (iii)}] There exist  rational numbers $0< r_i\leq 1/2 (1\leq i\leq n)$ such that $f$ is a $k$-linear combination of monomials $x_1^{m_1}\cdots x_n^{m_n}$, where      $\sum_{i=1}^n r_im_i=1$.
\end{enumerate}
\end{Definition}

See \cite[Subsection 2.2.1]{SimToh2014} for the easy equivalence of (i) and (ii).
The original definition of a quasihomogeneous polynomial in Saito's paper is the one in (iii) above. The equivalence of (i) and (iii) is left to the reader.


K. Saito~\cite{Saito} proved the following result:
\begin{Theorem}\label{Saito_original}\cite{Saito} Let $f \in \widetilde{R}:=\mathbb{C}\left\{x_1,...,x_n\right\}$ denote a convergent power series having an isolated singularity at the null point. 
The following are equivalent$:$
\begin{enumerate}
 \item[{\rm (i)}]  $\widetilde{R}$ admits an analytic coordinate change  changing $f$ into a quasihomogeneous polynomial. 
 \item[{\rm (ii)}]  $ f\in J_f$, where $J_f$ is the gradient ideal of $f$ in $\widetilde{R}$.
\end{enumerate}
\end{Theorem}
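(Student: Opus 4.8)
The plan is to treat the two implications very asymmetrically: $(i)\Rightarrow(ii)$ is a one-line observation, while $(ii)\Rightarrow(i)$ — the substance — requires the theory of singular holomorphic vector fields.

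For $(i)\Rightarrow(ii)$ I would first record that the membership $f\in J_f$ is invariant under analytic coordinate changes: if $\phi$ is an automorphism of $\widetilde R$, the chain rule gives $\partial(f\circ\phi)/\partial x_i=\sum_j\big((\partial f/\partial x_j)\circ\phi\big)\,\partial\phi_j/\partial x_i$, whence $J_{f\circ\phi}\subseteq\phi^{*}(J_f)$, with equality since $\phi$ is invertible; thus $f\circ\phi\in J_{f\circ\phi}$ iff $f\in J_f$. It then suffices to note that a quasihomogeneous polynomial $g$ satisfies $g=\sum_i(r_i/d)\,x_i\,\partial g/\partial x_i\in J_g$, i.e. Definition~\ref{quasihomog_def}(i).

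For $(ii)\Rightarrow(i)$, \emph{Step 1} is to write $f=\sum_{i=1}^{n}a_i\,\partial f/\partial x_i$ with $a_i\in\widetilde R$ and form the derivation $\delta:=\sum_i a_i\,\partial/\partial x_i$, so that $\delta(f)=f$; since $0$ is a singular point of $V(f)$ one has $f\in\fm^2$. One checks $\delta$ vanishes at the origin, i.e. $a_i(0)=0$ for all $i$: otherwise the holomorphic rectification (flow-box) theorem gives coordinates in which $\delta=\partial/\partial y_1$, so $\delta(f)=f$ forces $f=e^{y_1}c(y_2,\dots,y_n)$, hence $J_f\subseteq(y_2,\dots,y_n)$, contradicting that $J_f$ is $\fm$-primary. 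Let $\lambda_1,\dots,\lambda_n$ be the eigenvalues of the linear part $L$ of $\delta$ (brought to Jordan form by a linear change). \emph{Step 2}, the crux, is to prove that every $\lambda_i$ is a positive rational number. Comparing leading forms in $\delta(f)=f$ gives $L(f_d)=f_d$ for the lowest-degree form $f_d$ of $f$, so $1$ is an eigenvalue of $L$ on degree-$d$ forms, i.e. $\sum_i m_i\lambda_i=1$ for some $m$ with $|m|=d\ge2$; moreover differentiating $\delta(f)=f$ shows $\delta(\partial_jf)\equiv\partial_jf\pmod{J_f}$, so $\delta$ preserves $J_f$ and, at the semisimple level, makes $J_f$ a $\delta_s$-homogeneous ideal with $\delta_s=\sum_i\lambda_ix_i\partial_i$. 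The incompatibility of $\delta_s$-homogeneity with $\fm$-primariness unless the $\lambda_i$ are positive rationals — concretely, $J_f$ must contain a power $x_i^{N_i}$, which an argument of the same flavor as Step 1 shows is impossible when $\lambda_i\le0$ — is what has to be made precise here; I expect this to be the main obstacle. Rescaling, write $\lambda_i=r_i\in\mathbb{Q}_{>0}$ (normalizable so that $0<r_i\le1/2$) and clear denominators to obtain the integer $d$.

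\emph{Step 3} is the normalization: since all $\lambda_i>0$, $\delta$ lies in the Poincaré domain, where Poincaré's linearization theorem (with Dulac's refinement handling resonances via a polynomial normal form) supplies a \emph{convergent} coordinate change $\psi$, fixing the origin, putting $\delta$ into the form $\delta_s+\delta_n$ with $\delta_s=\sum_i r_ix_i\partial_i$ and $\delta_n$ commuting with $\delta_s$. As $\widehat{\widetilde R}$ is an inverse limit of finite-dimensional $\delta$-modules it splits into generalized $\delta$-eigenspaces, so the eigenvector $\psi^{*}f$ for the eigenvalue $1$ is automatically an eigenvector of the semisimple part: $\delta_s(\psi^{*}f)=\psi^{*}f$. \emph{Step 4}: expanding $\psi^{*}f=\sum_m c_mx^m$, this reads $\sum_i r_im_i=1$ for every $m$ in the support; since the $r_i$ are positive only finitely many such $m$ occur, so $\psi^{*}f$ is a genuine \emph{polynomial}, quasihomogeneous in the sense of Definition~\ref{quasihomog_def}(iii), and $\psi$ is analytic — which is exactly statement $(i)$. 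In summary the two genuine obstacles are (a) the positivity and rationality of the $\lambda_i$ in Step~2, to be extracted from $\fm$-primariness of $J_f$, and (b) the convergence of the normalizing transformation in Step~3; everything else is chain-rule bookkeeping and standard Poincaré-domain facts.
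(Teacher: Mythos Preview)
The paper does not prove this theorem; it is stated with the citation \cite{Saito} and followed only by a remark that Saito's argument is formal with additional effort required for convergence. There is nothing here to compare your attempt against --- you are reconstructing Saito's original proof, not anything the authors supply.

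That said, your outline tracks the classical Saito strategy faithfully: build a derivation $\delta$ with $\delta(f)=f$, show it is singular at the origin, analyze the eigenvalues of its linear part, and invoke Poincar\'e--Dulac normal-form theory in the Poincar\'e domain to linearize. You correctly flag the two hard points as (a) positivity and rationality of the $\lambda_i$ and (b) convergence of the normalizing transformation, and you are candid that (a) is not yet made precise. One refinement worth noting: rationality of the $\lambda_i$ does not come for free from positivity. In Saito's argument it emerges only \emph{after} normalization, from the observation that an isolated singularity forces, for each $i$, a monomial of the form $x_i^{a_i}$ or $x_i^{a_i}x_j$ to appear in $f$; the resulting integer linear system in the $\lambda_i$ then pins them down as rationals. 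Your Step~2 places rationality before normalization, which is not how it actually falls out.

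The paper's own contribution in this vicinity is located elsewhere: Theorem~\ref{QuasiHomogeneous} gives a new argument, over formal power series, for the implication $\nu(I_f)=n \Rightarrow f\in J_f$ via multiplicity theory and Theorem~\ref{Tessier}, bypassing vector fields and normal forms entirely.
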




\begin{Remark}
 Saito's argument throughout is formal -- i.e., over $\mathbb{C}[[x_1,\ldots,x_n]]$ -- though he puts some effort to show that the implication (ii) $\Rightarrow$ (i) works for convergent power series as well.
For a version over complex germs see, e.g.,  \cite[Characterization A]{YauZuo}, where the definitions are sort of turned  upside down regarding ours.
\end{Remark}
Let now $k$ stand for a field of zero characteristic, and let $R:=\pot{k}{x_1,\ldots,x_n}$ denote the ring of formal power series over $k$ (i.e., the $\langle x_1,\ldots,x_n \rangle_{\langle x_1,\ldots,x_n\rangle}$-adic  completion of the local ring $k[x_1,\ldots,x_n]_{\langle x_1,\ldots,x_n\rangle})$. 
Let $R/\langle f\rangle$ be a hypersurface ring with an isolated singularity (always assumed to be so at the null point, unless otherwise stated).

\begin{Definition}\label{quasihomogeneous_redefined} 
	With a slight lack of precision, a hypersurface ring $R/\langle f\rangle$ having an isolated singularity, for which the conditions of the above theorem are equivalent (with $\widetilde{R}$ replaced by $R$) is called {\em quasihomogeneous}.
	Likewise, the power series $f$ itself will be said to be quasihomogeneous. 
\end{Definition}

Since $R$ is a Noetherian local ring, if $R/\langle f\rangle$ is a hypersurface ring with an isolated singularity which is quasihomogeneous, then the partial derivatives of $f$ form a regular sequence in $R$. Hence, the Milnor algebra $M_f$ is a complete intersection, in particular an Artinian Gorenstein ring.

Using a result of Zariski on the structure of the module of derivations of an isolated singularity in a complete ring, Martsinkovsky~\cite[Proposition 1.1]{Marts} proved that the Tjurina algebra $T_f$ is Gorenstein if and only if $R/\langle f\rangle$ is quasihomogeneous.

We provide a different proof of this result below, applying Theorem \ref{Tessier} and adding further details.  In what follows, let $\mathcal{Z}(I_f)$ stand for the defining matrix of a finite free presentation of $I_f$ based on the generators  $\{f_{x_1},\ldots, f_{x_n},f\}$ (the {\em syzygy matrix} of the given generators).
Let $\mathcal{Z}(I_f)_{\bf 0}$ denote the matrix resulting from $\mathcal{Z}(I_f)$ upon evaluation $x_i\mapsto 0$, $\forall$ $i$. 

Here and throughout the rest of the paper,  the symbol $\nu(I)$ will stand for the minimal number of generators of an ideal $I$ of a Noetherian local ring, and $\lambda_R(-)$ will denote the length function.

\begin{Theorem}\label{QuasiHomogeneous} 
	Let $R/\langle f\rangle$ be a reduced hypersurface ring with an isolated singularity, with  $f\in R=\pot{k}{x_1,\ldots,x_n}$.  The following are equivalent$:$
	\begin{enumerate}
		\item[{\rm (a)}] $R/\langle f\rangle$  is quasihomogeneous.
		\item[{\rm (b)}]  The Tjurina algebra $T_f$ is an Artinian Gorenstein ring. 
        \item[{\rm (c)}] The minimal number of generators of $I_f$ is $n$.
        \item[{\rm (d)}] $\mathcal{Z}(I_f)_{\bf 0}$  has positive rank. 
	\end{enumerate}
\end{Theorem}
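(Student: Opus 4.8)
The plan is to establish the cycle of implications (a) $\Rightarrow$ (b) $\Rightarrow$ (c) $\Rightarrow$ (d) $\Rightarrow$ (a), drawing on Teissier's theorem (Theorem~\ref{Tessier}) and multiplicity theory. The implication (a) $\Rightarrow$ (b) is essentially Saito's theorem together with the observation that if $f$ is quasihomogeneous then $f \in J_f$, so $I_f = J_f = \langle f_{x_1},\ldots,f_{x_n}\rangle$ is generated by a regular sequence (since the singularity is isolated), hence $T_f = M_f$ is a complete intersection, in particular Artinian Gorenstein.

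For (b) $\Rightarrow$ (c): if $T_f$ is Artinian Gorenstein, I would argue that $I_f$ must be minimally generated by exactly $n$ elements. One direction is automatic, $\nu(I_f) \geq n$, because $I_f$ is $\fm$-primary of height $n$ in a regular local ring, so by Krull it needs at least $n$ generators. For the reverse bound I would use that $R$ is a regular local ring and $T_f = R/I_f$ is Gorenstein Artinian: the second Betti number of $T_f$ equals the first (by self-duality of the minimal free resolution of a Gorenstein quotient), and I can compare this against the Koszul-type relations. Actually the cleanest route is: by Teissier (Theorem~\ref{Tessier}), $J_f$ is a reduction of $I_f$, and $J_f$ is generated by the $n$-element system of parameters $f_{x_1},\ldots,f_{x_n}$; if $\nu(I_f) = n$ then $I_f$ is also generated by a regular sequence and is a complete intersection. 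Conversely, if $T_f$ is Gorenstein, I would invoke that an Artinian Gorenstein quotient of a regular local ring which contains an $\fm$-primary complete intersection $J_f$ as a reduction must in fact equal a complete intersection when $J_f \subseteq I_f$ with equal multiplicities — here is where one uses $e(J_f) = e(I_f)$ from Teissier and the fact that $J_f$ is generated by a system of parameters in the Cohen--Macaulay (regular) ring $R$, giving $\lambda(R/J_f) = e(J_f) = e(I_f) \leq \lambda(R/I_f)$, forcing a relation that, combined with Gorensteinness and the structure of linkage, yields $I_f = J_f$ up to change of generators and $\nu(I_f) = n$. I expect this to be the most delicate step and I would lean on Huneke's equality result (stated in the excerpt) applied to $I = I_f$ to pin down $I_f^2 = J_f I_f$ and then transfer Gorensteinness.

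For (c) $\Rightarrow$ (d): the syzygy matrix $\mathcal{Z}(I_f)$ presents $I_f$ on the generators $\{f_{x_1},\ldots,f_{x_n},f\}$. If $\nu(I_f) = n$ then $f$ is redundant, i.e. there is a syzygy of the form $f - \sum_i a_i f_{x_i} = 0$ with coefficients $a_i \in R$ lying in the column space of $\mathcal{Z}(I_f)$; evaluating at the origin and using that such a redundancy forces a unit entry (the coefficient of $f$) in some column of $\mathcal{Z}(I_f)_{\bf 0}$, I conclude $\mathcal{Z}(I_f)_{\bf 0}$ has positive rank. Conversely, for (d) $\Rightarrow$ (a): if $\mathcal{Z}(I_f)_{\bf 0}$ has positive rank, there is a column of $\mathcal{Z}(I_f)$ whose constant-term vector is nonzero, giving a relation $\sum_i a_i f_{x_i} + a_{n+1} f = 0$ with not all $a_i(\mathbf{0})$, $a_{n+1}(\mathbf{0})$ zero. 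If the constant term of $a_{n+1}$ were zero then, since the $f_{x_i}$ have no degree-zero part either, one gets a nontrivial syzygy among $f_{x_1},\ldots,f_{x_n}$ with some unit coefficient, contradicting that they form a regular sequence (they do, because the singularity is isolated and $R$ is regular — a regular sequence has no unit syzygy coefficient). Hence $a_{n+1}$ is a unit, so $f \in \langle f_{x_1},\ldots,f_{x_n}\rangle = J_f$, and by Saito's theorem (Theorem~\ref{Saito_original}, in the formal setting of Definition~\ref{quasihomogeneous_redefined}) $R/\langle f \rangle$ is quasihomogeneous. The main obstacle is the honest verification of (b) $\Rightarrow$ (c), where I must rule out the possibility of a Gorenstein Artinian $T_f$ with $\nu(I_f) > n$; I anticipate needing the full strength of Huneke's criterion plus the observation that $I_f$ linked to a complete intersection $J_f$ inside the Gorenstein ring $R$ with $e(J_f) = e(I_f)$ leaves no room for extra generators.
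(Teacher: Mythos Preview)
Your cycle $(a)\Rightarrow(b)\Rightarrow(c)\Rightarrow(d)\Rightarrow(a)$ has two genuine gaps.

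\textbf{The step $(b)\Rightarrow(c)$.} The route you sketch through Huneke's criterion and linkage is both unnecessary and, as written, incorrect: you claim $e(I_f)\leq\lambda(R/I_f)$, but in a Cohen--Macaulay local ring the inequality goes the other way, since a minimal reduction $J_f$ of $I_f$ is a parameter ideal and gives $e(I_f)=e(J_f)=\lambda(R/J_f)\geq\lambda(R/I_f)$. Even after fixing the sign, your chain of inequalities never isolates where the Gorenstein hypothesis is used. The paper dispatches this step in one line via Kunz's theorem \cite[Corollary 1.2]{Kunz}: a strict almost complete intersection (an ideal of height $n$ minimally generated by $n+1$ elements) is never Gorenstein. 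Since $\codim(I_f)=n$ and $\nu(I_f)\leq n+1$, Gorensteinness of $T_f$ forces $\nu(I_f)=n$.

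\textbf{The step $(d)\Rightarrow(a)$.} Your claim that ``if $a_{n+1}(\mathbf{0})=0$ then one gets a nontrivial syzygy among $f_{x_1},\ldots,f_{x_n}$ with a unit coefficient'' is false. The relation $\sum_i a_i f_{x_i}+a_{n+1}f=0$ still involves $f$; it does not reduce to a relation among the partials alone unless you already know $f\in J_f$, which is what you are trying to prove. What that case actually shows is that some $f_{x_i}$ is redundant in the generating set $\{f_{x_1},\ldots,f_{x_n},f\}$ of $I_f$, i.e.\ it gives $(d)\Rightarrow(c)$ (this is essentially the paper's Fitting-ideal argument for $(c)\Leftrightarrow(d)$). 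To close the loop you then need the paper's $(c)\Rightarrow(a)$: with $\nu(I_f)=n=\codim(I_f)$ the ideal $I_f$ is itself a parameter ideal, so $\lambda(R/I_f)=e(I_f,R)$; by Theorem~\ref{Tessier} $J_f$ is a reduction of $I_f$, whence $e(I_f,R)=e(J_f,R)=\lambda(R/J_f)$; therefore $\lambda(R/I_f)=\lambda(R/J_f)$, forcing $I_f=J_f$ and $f\in J_f$. Note that this multiplicity argument, which you tried to deploy for $(b)\Rightarrow(c)$, is exactly what is needed here instead.
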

\begin{proof}

$(a)\Rightarrow (b)$. This is clear from the definitions, as in this case the Tjurina algebra and the Milnor algebra are one and the same.

$(b)\Rightarrow (c)$. This is also clear as an strict  almost complete intersection  is not a Gorenstein ring (\cite[Corollary 1.2]{Kunz}).

\smallskip

$(c) \Leftrightarrow (d)$  
Consider the presentation matrix of $I_f$,  based on the generators   $\{f_{x_1},\ldots, f_{x_n},f\}$
\[
R^m \xrightarrow{\mathcal{Z}(I_f)} R^{n+1} \longrightarrow I_f \longrightarrow 0.
\]  
Since $R/\langle f\rangle$ is a hypersurface ring with an isolated singularity, $\codim(I_f)\geq n$; in particular, the minimal number of generators of $I_f$ is  at least $n$.   By general properties of Fitting ideals (\cite[Proposition 20.6]{EisenbudBook}), the minimal number of generators of $I_f$ is at most $n$ if and only if the maximal ideal $\langle x_1,\ldots,x_n\rangle \subset R$ does not contain $\operatorname{Fitt}_n(I_f) = I_1(\mathcal{Z}(I_f))$.
Therefore, as $R$ is local, the minimal number of generators of $I_f$ is exactly $n$ if and only if $I_1(\mathcal{Z}(I_f))$ is the unit ideal, i.e.,  if and only if at least one among the entries of $\mathcal{Z}(I_f)$ is not contained in the  ideal $\langle x_1,\ldots,x_n\rangle \subset R$.

\smallskip

$(c) \Rightarrow (a)$  By the standing data, $R$ is a Cohen-Macaulay local ring of dimension $n$ and $J_f$ is generated by a regular sequence forming a system of parameters of $R$. Thus, $\lambda_R(R/J_f) = e(J_f, R)$ (see \cite[17.11]{Matsumura}), where $e(I_f, R)$ is the multiplicity of $R$ with respect to $J_f$.

Similarly, by the assumption in (c), $\nu(I_f)=n$,  while $\codim(I_f)\geq \codim(J_f)=n$. Again, since $R$ is Cohen--Macaulay, $I_f$ can be generated by a regular sequence that  forms a system of parameters of $R$. Thus,   
$\lambda_R(R/I_f) = e(I_f, R)$ as well.

By Theorem~\autoref{Tessier}, $J_f$ is a reduction of $I_f$, which means that $e(J_f, R) = e(I_f, R)$ \cite[14.13]{Matsumura}. Consequently,  
\[
\lambda_R(R/J_f) = \lambda_R(R/I_f).
\]  
From the exact sequence  of modules of finite length
\[
0 \longrightarrow R/\langle J_f:f\rangle \longrightarrow R/J_f \longrightarrow R/I_f \longrightarrow 0
\]  
follows   
$\lambda_R(R/J_f:f) = 0,$ i.e., 
 $R = J_f:f$, hence $f\in J_f$ as desired.  
\end{proof}


\begin{Remark}\rm
(1)  We note that both the main argument in the above implication (c) $\Rightarrow$ (d), and the one in the implication  (1) $\Rightarrow$ (2)  in Martsinkovsky's,  contradict one and the same assumption but follow different paths.

(2) Besides the algebraic significance of Theorem~\ref{QuasiHomogeneous},  the implication \((d) \Rightarrow (a)\) provides a particularly simple and practical criterion of quasihomogeneity. Furthermore, the ideal generated by the entries of the syzygy matrix is the entire ring \( R \), a result  recently brought up 
in~\cite{ABDM}.

(3) Concerning the computational issue of testing quasihomogeneity, it is often more practical to work with a different set of generators for the ideal $I_f$, such as a Gr\"obner basis when the base ring is a polynomial ring. In this case, the presentation matrix of $\mathcal{Z}_f$ is not necessarily an $(n+1)$-row matrix. Consequently, one must adapt the criterion in part (d) as follows: the evaluated matrix $\mathcal{Z}(I_f)_{\bf 0}$ should have {\it corank} at most $n$. In other words, if the ideal \( I_f \) has a presentation with \( N \) rows, then the rank of $\mathcal{Z}(I_f)_{\bf 0}$ must be at least \( N - n \). The package {\bf FastMinors} in Macaulay2 provides the function {\bf isRankAtLeast} to implement this test  efficiently.
\end{Remark} 

\begin{Example}
(i) Let $f=xy+xz+yz+xyz\in k[x,y,z]$. Then $f$, considered as an element of $R=k[[x,y,z]]$,  defines an isolated surface singularity at the null point. The transpose of 
	\[\begin{bmatrix}
		xz+2z+z& -(yz+z)& z^2+z&-(z+2)	
	\end{bmatrix} 
	\]
	 is a syzygy element of the ideal $I_f=\langle f_x,f_y,f_z,f\rangle \subset R$ that does not vanish at the null point.
	 Thus, $R/\langle f\rangle$ is quasihomogeneous.  
	 
(ii)  This example is in \cite[Example 2.6]{Abbas}.  Consider the curve singularity defined by  $f=x^4+x^3y^2+y^6\in k[x,y]$. A computation with \cite{DGPS} yields the following syzygy matrix  of $I_f$
	\[ 
\mathcal{Z}(I_f)=	\left(\begin{matrix}
		24x+18y^2& 96y+18xy\\
		-6x^2-4xy^2&-24xy-4x^2y+2y^3\\
		-4xy-3y^3& -x^2-16y^2-3xy^2
		\end{matrix}\right).\] 
	Then $\rk (\mathcal{Z}(I_f)(0,0))=0$ which implies that $R/\langle f\rangle$ is not quasihomogeneous. 
\end{Example}

\section{Complements on the Milnor--Tjurina  difference number}\label{Difference}
As previously, let $R/\langle f \rangle$ be a hypersurface with an isolated singularity, where $f \in R = \pot{k}{x_1,\ldots,x_n}$.

\subsection{Reduction exponent one}

Driving upon the preliminares in the Introduction, Theorem~\ref{Tessier} implies that for an isolated singularity  $f \in A:=R= \pot{k}{x_1,\ldots,x_n}$, one has
$$\mu_f-\tau_f=\lambda(R/J_f)-\lambda(R/I_f)=e(J_f,R)-\lambda(R/I_f)=e(I_f,R)-\lambda(R/I_f).$$

The main drive in this part is the case where the reduction exponent of $I_f$ with respect to $J_f$ is $1$.
In this context  we provide a simple proof of the easy part of the result by Huneke \cite{Huneke2}. 
\begin{Proposition}\label{Hunekes}  Let $R/\langle f \rangle$ be an isolated hypersurface singularity with $f \in R = \pot{k}{x_1,\ldots,x_n}$. If $I_f^2=J_f I_f$ then $e_1(I_f,R)=\mu_f-\tau_f$.
\end{Proposition}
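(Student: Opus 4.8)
The plan is to deduce the equality straight from the material assembled in the Introduction, since the hypothesis $I_f^2 = J_f I_f$ is exactly the input feeding the easy half of Huneke's characterization \cite[Theorem 2.1]{Huneke2}.

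First I would record the structural picture. The ring $R = \pot{k}{x_1,\ldots,x_n}$ is regular, hence Cohen--Macaulay, of dimension $n$, and its residue field $k$ is infinite because $\chara k = 0$; thus the Hilbert--Samuel formalism recalled in the Introduction applies to the $\fm$-primary ideal $I_f$ and to its subideal $J_f$. Since the singularity is isolated, $J_f$ is $\fm$-primary, so $\codim J_f = n = \dim R$, whence the $n$ partial derivatives $\partial f/\partial x_i$ form a system of parameters and --- $R$ being Cohen--Macaulay --- a regular sequence. By Theorem~\ref{Tessier}, $J_f$ is a reduction of $I_f$, and being generated by $n = \dim R = \ell(I_f)$ elements it is a \emph{minimal} reduction of $I_f$.

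Next I would pin down the two invariants in Hilbert terms. As $J_f$ is a reduction of $I_f$, one has $e_0(I_f,R) = e(I_f,R) = e(J_f,R)$ by \cite[Theorem 14.13]{Matsumura}; and as $J_f$ is generated by a system of parameters in the Cohen--Macaulay ring $R$, $e(J_f,R) = \lambda(R/J_f) = \mu_f$ by \cite[Theorem 17.11]{Matsumura}. On the other hand $\lambda(R/I_f) = \tau_f$ by definition. Hence $e_0(I_f,R) - \lambda(R/I_f) = \mu_f - \tau_f$, which is precisely the chain of equalities displayed just before the statement.

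Finally, the hypothesis $I_f^2 = J_f I_f$ says $I_f^2 = J I_f$ for the minimal reduction $J = J_f$, so the converse implication of Huneke's theorem \cite[Theorem 2.1]{Huneke2} yields $e_1(I_f,R) = e_0(I_f,R) - \lambda(R/I_f)$; combined with the previous step this is $e_1(I_f,R) = \mu_f - \tau_f$. There is no genuine obstacle in this argument: it is an assembly of cited facts, and the only points demanding a little care are (i) checking that $J_f$ really is a minimal reduction so that Huneke's statement applies verbatim --- which is what the system-of-parameters observation secures --- and (ii) noticing that characteristic zero supplies the infinite residue field tacitly assumed in the Hilbert-coefficient setup. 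It is worth adding that the full force of Huneke's theorem then also gives $e_2(I_f,R) = \cdots = e_n(I_f,R) = 0$ and $\mathrm{HF}_{I_f}(t) = P_{I_f}(t)$ for all $t \geq 1$.
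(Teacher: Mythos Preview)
Your proof is correct, but it takes a route essentially opposite to the paper's. The text immediately preceding the statement announces the goal as giving ``a simple proof of the easy part of the result by Huneke \cite{Huneke2}''; accordingly, the paper does \emph{not} cite Huneke's theorem but reproves the needed implication from scratch. It uses that $J_f$ is generated by a regular sequence, so $\gr_{J_f}(R)$ is a polynomial ring over $R/J_f$; this gives closed binomial formulas for $\lambda(J_f^m/J_f^{m+1})$, $\lambda(R/J_f^m)$, and (after tensoring with $R/I_f$) for $\lambda(J_f^m/I_fJ_f^m)$. The hypothesis $I_f^2=J_fI_f$ then forces $I_f^{m+1}=J_f^mI_f$ for all $m\ge 1$, and splicing the short exact sequence $0\to J_f^m/J_f^mI_f\to R/J_f^mI_f\to R/J_f^m\to 0$ produces an explicit polynomial expression for $\lambda(R/I_f^m)$, from which $e_1(I_f,R)=\mu_f-\tau_f$ is read off by comparison with the normalized Hilbert--Samuel polynomial.

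What each approach buys: yours is a two-line deduction from the cited literature and is perfectly valid as a proof of the proposition (incidentally, the converse in Huneke's theorem as quoted in the paper only asks for \emph{some} reduction $J$ with $I^2=JI$, so your care in verifying minimality of $J_f$, while correct, is more than strictly needed). The paper's approach is longer but self-contained, exhibits the mechanism behind Huneke's implication in this concrete setting, and delivers the exact Hilbert--Samuel polynomial of $I_f$ along the way --- which is the declared purpose of including the proposition.
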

\begin{proof} Since $J_f$ is generated by a regular sequence, the associated graded ring $\gr_{J_f}(R)$ is isomorphic to a polynomial ring in $n$ variables over $R/J_f$~\cite[Theorem 1.1.8]{Herzog.B}. Consequently, for every integer $m \geq 0$, we have
	\begin{equation}\label{basic_eq}
	\frac{J_f^m}{J_f^{m+1}} \;\simeq\; \left(\frac{R}{J_f}\right)^{\oplus \binom{m+n-1}{m}} .
		\end{equation}
	It follows that
	\[
	\lambda\!\left(\frac{J_f^m}{J_f^{m+1}}\right) \;=\; \mu_f \binom{m+n-1}{m}.
	\]
	Now consider the filtration $R \supset J_f \supset J_f^2 \supset \cdots $.
	For any $m \geq 1$,
	\[
	\lambda(R/J_f^m)
	= \sum_{t=0}^{m-1} \lambda(J_f^t/J_f^{t+1})
	= \mu_f \sum_{t=0}^{m-1} \binom{n+t-1}{t}.
	\]
	By the standard binomial identity
	\[
	\sum_{t=0}^{m-1} \binom{n+t-1}{t} = \binom{n+m-1}{m-1},
	\]
	we obtain
	\[
	\lambda(R/J_f^m) \;=\; \mu_f \binom{n+m-1}{m-1}.
	\]
	
	Next, tensoring the isomorphism~(\ref{basic_eq}) with $R/I_f$, we get
	\[
	\frac{J_f^m}{I_f J_f^{m}}
	\simeq \frac{J_f^m}{J_f^{m+1}} \otimes_R \frac{R}{I_f}
	\simeq \left(\frac{R}{J_f}\right)^{\oplus \binom{m+n-1}{m}}\otimes_R \frac{R}{I_f}
	\simeq \left(\frac{R}{I_f}\right)^{\oplus \binom{m+n-1}{m}} .
	\]
	Hence
	\[
	\lambda\!\left(\frac{J_f^m}{I_f J_f^{m}}\right)
	= \lambda\!\left(\frac{R}{I_f}\right) \binom{m+n-1}{m}
	= \tau_f \binom{m+n-1}{m}.
	\]
	
	Finally, from the short exact sequence
	\[
	0 \longrightarrow \frac{J_f^m}{J_f^m I_f} 
	\longrightarrow \frac{R}{J_f^m I_f} 
	\longrightarrow \frac{R}{J_f^m} 
	\longrightarrow 0,
	\]
	we conclude
	\[
	\lambda\!\left(\frac{R}{J_f^m I_f}\right)
	= \lambda\!\left(\frac{J_f^m}{J_f^m I_f}\right)
	+ \lambda\!\left(\frac{R}{J_f^m}\right)
	= \tau_f \binom{m+n-1}{m} + \mu_f \binom{n+m-1}{m-1}.
	\]
	Now, assuming that $I_f^2 = J_f I_f$, then clearly $ I_f^{m+1}=J_f^m I_f$ for all $m\geq 1$.
	Therefore,
	\[
	\lambda\!\left(\frac{R}{I_f^{m+1}}\right)
	= \tau_f \binom{m+n-1}{m} + \mu_f \binom{n+m-1}{m-1}.
	\]
	
	Replacing $m$ by $m-1$ yields
	\begin{eqnarray*}
	\lambda\!\left(\frac{R}{I_f^{m}}\right)
	&=& \tau_f \binom{m+n-2}{m-1} + \mu_f \binom{n+m-2}{m-2}\\ 
	&=& \tau_f \binom{m+n-2}{\,n-1} + \mu_f \binom{n+m-2}{\,n}.
		\end{eqnarray*}
	Using the identity 
	$\binom{n+m-1}{n}=\binom{n+m-2}{n}+\binom{n+m-2}{n-1}$,
	the above becomes
	\[
	\lambda\!\left(\frac{R}{I_f^{m}}\right)
	= \mu_f \binom{n+m-1}{n} - (\mu_f - \tau_f)\binom{m+n-2}{\,n-1}.
	\]
	
	For $m>\!\!>0$, comparing with the normalized Hilbert–Samuel form~\eqref{normalized} -- since the latter is uniquely defined in terms of the present combinatorial numbers -- we must have $e_1(I_f,R) = \mu_f - \tau_f,$ as stated.
\end{proof}

\subsection{The role of the syzygy-theoretic part} 

We next come up  with a new algebraic formula for $\mu_f-\tau_f$ by bringing in the invariant $\delta(I_f)$ introduced in~\cite{AronSyzgetic}. The simplest way to introduce $\delta(I_f)$ is as the kernel of   the natural surjection $\sym_R^2(I_f)\surjects I_f^2$ of the symmetric algebra of $I_f$ onto its Rees algebra as regarded in degree $2$ (\cite[Corollary 1.2]{Aron-Wolmer}).

In the next theorem we draw upon Definition~\ref{quasihomogeneous_redefined}.

\begin{Theorem}\label{DiferMT}
	Let $R/\langle f \rangle$ be a hypersurface with an isolated  singularity, with $f\in R=\pot{k}{x_1,\ldots,x_n}$. 
	Then$:$
	\begin{enumerate}
		\item[{\rm (i)}]
		$\mu_f-\tau_f=\lambda(I_f^2/J_fI_f)+\lambda(\delta(I_f))$
		
		\noindent		In particular, $\mu_f-\tau_f=\lambda(\delta(I_f))$ if and only if the reduction number of $I_f$ relative to $J_f$ is $1$.
		\item[{\rm (ii)}] The following are equivalent:
		\begin{enumerate}
			\item[{\rm (a)}] $R/\langle f \rangle$ is quasihomogeneous.
			\item[{\rm (b)}] $\delta(I_f)=0$.
		\end{enumerate}
			\item[{\rm (iii)}] If $\mu_f-\tau_f=1$ then $I_f^2=J_fI_f$.
	\end{enumerate}
	
\end{Theorem}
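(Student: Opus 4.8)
The plan is to establish (i) first, since (ii) and (iii) will follow from it by routine arguments. The starting point is Teissier's criterion (Theorem~\ref{Tessier}): $J_f$ is a reduction of $I_f$, and since $R$ is Cohen--Macaulay of dimension $n$ with $J_f$ generated by the regular sequence of partials, $J_f$ is a minimal reduction. Combined with the identities recalled just before Proposition~\ref{Hunekes}, one has $\mu_f-\tau_f = e(I_f,R)-\lambda(R/I_f)$. The module $\delta(I_f)$ sits in the exact sequence $0\to\delta(I_f)\to\sym^2_R(I_f)\to I_f^2\to 0$, so the strategy is to compute $\lambda(\sym^2_R(I_f))$ in two ways and compare. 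On one hand, $\lambda(\sym^2_R(I_f)) = \lambda(I_f^2) + \lambda(\delta(I_f))$ directly from the sequence (all modules being of finite colength-type, I should be careful here: these are not finite length, so I will instead work with the finite-length modules $J_fI_f/I_f^2$-type comparisons, or pass to $\sym^2(I_f)\otimes R/\fm^N$-truncations — more precisely, I will compare $\sym^2_R(I_f)$ with $\sym^2_R(J_f)$ via the surjection induced by a minimal generating inclusion and track cokernels/kernels, which are of finite length).

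The cleaner route: since $J_f$ is generated by a regular sequence of length $n$, $\sym_R(J_f) = \Rees(J_f)$ is a polynomial ring over $R/\text{(something)}$, and in degree $2$, $\sym^2_R(J_f)\cong J_f^2$ with $\delta(J_f)=0$. Now choose generators so that $J_f\subseteq I_f$ induces $\sym^2_R(J_f)\to\sym^2_R(I_f)$ and $J_f^2\to I_f^2$. A diagram chase relating $\delta(I_f)$, $\delta(J_f)=0$, the kernel/cokernel of $J_f^2\to I_f^2$, and the kernel/cokernel of $\sym^2_R(J_f)\to\sym^2_R(I_f)$ will produce $\lambda(\delta(I_f))$ in terms of $\lambda(I_f^2/J_f^2)$ (note $J_f^2\subseteq J_fI_f\subseteq I_f^2$) and $\lambda$ of the symmetric-algebra cokernel. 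The symmetric-algebra cokernel $\sym^2_R(I_f)/(\text{image of }\sym^2_R(J_f))$ is computed by the generators of $I_f$ over those of $J_f$: since $I_f=J_f+(f)$, the extra degree-$2$ symmetric generators are $f\cdot f_{x_i}$ and $f^2$, and the relations among them modulo $\sym^2(J_f)$ come from the syzygies of $I_f$; tracking this should yield that this cokernel has length $\lambda(R/I_f)$, coming from the $f^2$-generator modulo $f\cdot I_f$. Assembling: $\lambda(\delta(I_f)) = \mu_f-\tau_f-\lambda(I_f^2/J_fI_f)$, which is (i). The asserted equivalence in the last line of (i) is then immediate: $\lambda(I_f^2/J_fI_f)=0 \iff I_f^2=J_fI_f \iff r_{J_f}(I_f)\leq 1$.

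For (ii): by Theorem~\ref{QuasiHomogeneous}, $R/\langle f\rangle$ is quasihomogeneous iff $f\in J_f$, i.e.\ $I_f=J_f$, in which case $\delta(I_f)=\delta(J_f)=0$ since $J_f$ is generated by a regular sequence (the symmetric and Rees algebras agree — \cite[Corollary 1.2]{Aron-Wolmer} or the fact that a complete intersection ideal is of linear type). Conversely, if $\delta(I_f)=0$ then from (i), $\lambda(I_f^2/J_fI_f)=\mu_f-\tau_f$; but one also always has $\lambda(I_f^2/J_fI_f)\le$ (something bounded by the difference) — the honest argument is: $\delta(I_f)=0$ forces $I_f$ to be of linear type in degree $2$, and an almost complete intersection that is not a complete intersection has $\delta\neq 0$ (here I invoke that a strict almost complete intersection ideal $I$ with $\nu(I)=\codim(I)+1$ satisfies $\delta(I)\neq 0$, from the structure of $\sym^2$ versus the Koszul-type relations — or more directly, $\delta(I_f)=0 \Rightarrow \mu_f=\tau_f \Rightarrow f\in J_f$). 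So I will need the implication $\delta(I_f)=0\Rightarrow I_f^2=J_fI_f\Rightarrow \mu_f=\tau_f$; the first arrow needs care and is the most delicate point. For (iii): if $\mu_f-\tau_f=1$, then by (i), $1=\lambda(I_f^2/J_fI_f)+\lambda(\delta(I_f))$; since both summands are nonnegative integers and $\delta(I_f)\neq 0$ whenever $I_f$ is a strict almost complete intersection (which holds here, as $\mu_f\neq\tau_f$ means $f\notin J_f$, so $\nu(I_f)=n+1>n=\codim I_f$), we must have $\lambda(\delta(I_f))=1$ and $\lambda(I_f^2/J_fI_f)=0$, i.e.\ $I_f^2=J_fI_f$.

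**Main obstacle.** The crux is the precise bookkeeping in part (i): making rigorous the length computation of the cokernel of $\sym^2_R(J_f)\to\sym^2_R(I_f)$ (showing it equals $\lambda(R/I_f)=\tau_f$) and the interplay with $\lambda(I_f^2/J_f^2)$ versus $\lambda(I_f^2/J_fI_f)$ and $\lambda(J_fI_f/J_f^2)$ — all the finite-length pieces must be assembled so that the $\mu_f$'s and $\tau_f$'s cancel correctly. A secondary obstacle is the implication $\delta(I_f)=0\Rightarrow \mu_f=\tau_f$ needed for the converse in (ii): one must rule out $\delta(I_f)=0$ for a strict almost complete intersection, which I expect to follow from comparing the symmetric square with the second Rees component via the known presentation of $\delta(I_f)$ in \cite{Aron-Wolmer}, \cite{AronSyzgetic}, but it requires invoking that framework carefully rather than a one-line argument.
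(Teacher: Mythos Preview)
Your route to (i) differs from the paper's and rests on a miscalculated length. The paper does not compare $\sym^2_R(J_f)$ with $\sym^2_R(I_f)$ at all; instead it computes $\lambda(I_f/I_f^2)$ via the syzygetic exact sequence
\[
0\longrightarrow\delta(I_f)\longrightarrow H_1\longrightarrow (R/I_f)^{n+1}\longrightarrow I_f/I_f^2\longrightarrow 0,
\]
using that the first Koszul homology $H_1$ of $I_f$ is the canonical module of the Artinian ring $R/I_f$ and hence has length $\tau_f$. This yields $\lambda(I_f/I_f^2)=n\tau_f+\lambda(\delta(I_f))$; combined with $\lambda(I_f/J_fI_f)=\mu_f+(n-1)\tau_f$ (from $J_f/J_fI_f\cong (R/I_f)^n$), formula (i) drops out immediately. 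Your snake-lemma comparison is salvageable in principle, but the guess that the cokernel of $\sym^2_R(J_f)\to\sym^2_R(I_f)$ has length $\tau_f$ is wrong: the snake lemma gives $\lambda(\mathrm{coker})=\lambda(\delta(I_f))+\lambda(I_f^2/J_f^2)$, and since $\lambda(J_fI_f/J_f^2)=n(\mu_f-\tau_f)$, the identity you actually need is $\lambda(\mathrm{coker})=(n+1)(\mu_f-\tau_f)$, which is equivalent to (i) itself --- so the computation you flagged as the ``main obstacle'' is circular as stated.

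For (ii)(b)$\Rightarrow$(a) there is a genuine gap. Your chain $\delta(I_f)=0\Rightarrow I_f^2=J_fI_f\Rightarrow \mu_f=\tau_f$ has no argument for the first arrow: under $\delta(I_f)=0$, part (i) only gives $\mu_f-\tau_f=\lambda(I_f^2/J_fI_f)$, and nothing forces this to vanish. The alternative you mention --- that a strict almost complete intersection always has $\delta\neq 0$ --- is precisely the content of (b)$\Rightarrow$(a) in this setting, not a citable fact. The paper supplies the missing idea: when $\delta(I_f)=0$ the syzygetic sequence above becomes short exact, and since $H_1$ is the canonical module of the \emph{Artinian} ring $R/I_f$ it is injective as an $R/I_f$-module, so the sequence splits. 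Counting minimal generators then gives $\nu(I_f)=n+1-\nu(H_1)\le n$, whence $I_f$ is a complete intersection and Theorem~\ref{QuasiHomogeneous} applies. Your argument for (iii) is correct once (ii) is in hand.
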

\begin{proof}
	Recall from Definition~\ref{quasihomogeneous_redefined}	 that  $R/\langle f \rangle$ being quasihomogeneous means that $\mu_f=\tau_f$. 
	
	\smallskip
	
	\noindent (i) Consider the exact sequence:
	\begin{equation} \label{nonsense-seq}
		0 \longrightarrow J_f/I_f J_f \longrightarrow R/J_f I_f \longrightarrow R/J_f \longrightarrow 0,
	\end{equation}
	where all terms have finite length. Since $J_f$ is generated by an $R$-regular sequence, we have:
	\[
	(J_f/J_f^2) \otimes_R (R/I_f) \cong (R/I_f)^n.
	\]
	Clearly,
	\[
	J_f/J_f I_f \cong (J_f/J_f^2) \otimes_R (R/I_f).
	\]
	Taking lengths in (\ref{nonsense-seq}) gives
	\begin{equation} \label{1}
		\lambda(R/J_f I_f) = \lambda(R/J_f) + \lambda(J_f/J_f I_f) = \mu_f + n \tau_f.
	\end{equation}
	Using this and the exact sequence
	\[
	0 \longrightarrow I_f^2/J_f I_f \longrightarrow I_f/J_f I_f \longrightarrow I_f/I_f^2 \longrightarrow 0,
	\]
	it obtains
	\begin{equation} \label{reduction2}
		\lambda(I_f^2/J_f I_f) + \lambda(I_f/I_f^2) = \lambda(I_f/J_f I_f) = \mu_f + (n-1) \tau_f.
	\end{equation}
	Now, consider a minimal presentation of $I_f$:
	\[
	0 \longrightarrow \mathcal{Z}(I_f) \longrightarrow R^{n+1} \longrightarrow I_f \longrightarrow 0.
	\]
	Tensoring with $R/I_f$ gives an exact sequence \cite[Section 1, Proposition 2.4]{Aron-Wolmer}:
	\begin{equation} \label{syzygetic}
		0 \longrightarrow \delta(I_f) \longrightarrow H_1 \longrightarrow (R/I_f)^{n+1} \longrightarrow I_f/I_f^2 \longrightarrow 0
	\end{equation}
	of modules of finite length.
	Here, $H_1$ is the first Koszul homology module of $I_f$. It is the canonical module of $R/I_f$ \cite[1.6.16]{Herzog.B}, and moreover,
	\[
	\lambda(H_1) = \lambda(R/I_f) \quad \text{\cite[3.2.12]{Herzog.B}}.
	\]
	Thus, we get:
	\begin{equation} \label{conormal}
		\lambda(I_f/I_f^2) = n \tau_f + \lambda(\delta(I_f)).
	\end{equation}
	From \eqref{reduction2} and \eqref{conormal}, we derive the equality
	\[
	\mu_f - \tau_f = \lambda(I_f^2/J_f I_f) + \lambda(\delta(I_f)),
	\]
	as desired.
	
	\medskip
	(ii) The implication (a) $\Rightarrow$ (b) is clear, since the right-hand side of the formula in (i) is a sum of nonnegative integers. Alternatively, and more directly, (a) gives that $I_f$ is generated by a regular sequence, in which case it is clearly a syzygetic ideal.
	
	(b) $\Rightarrow$ (a): If $\delta(I_f) = 0$, then from \eqref{syzygetic} we obtain the short exact sequence:
	\[
	0 \longrightarrow H_1 \longrightarrow R_f^{n+1} \longrightarrow I_f/I_f^2 \longrightarrow 0.
	\]
	Since $H_1$ is the canonical module of an Artinian ring, it is an injective module. Thus, this sequence splits:
	\[
	R_f^{n+1} \simeq H_1 \oplus I_f/I_f^2.
	\]
	Tensoring with $k$ gives the following number equality:
	\[
	\nu(H_1) + \nu(I_f/I_f^2) = n+1.
	\]
	 Since $\nu(I_f/I_f^2) = \nu(I_f)$ and $\nu(H_1) \neq 0$, we get $\nu(I_f) = n$.
	Then, by Theorem~\ref{QuasiHomogeneous}, we conclude that $I_f = J_f$, as claimed.
	
	(iii). 
	 If $\mu_f-\tau_f=1$ then $\lambda(I_f^2/I_fJ_f)=0$ by part (i). This proves the assertion.
\end{proof}




\subsection{Variations on the Brian\c{c}on-Skoda exponent}

Another way to compare Milnor and Tjurina numbers is to look at the ratio $\mu_f/\tau_f$. In order to approach this facet, we recall the following definition.

\begin{Definition}\label{BSdefinition}
Let $R/\langle f \rangle$ define an isolated hypersurface singularity, with $f \in R = \pot{k}{x_1,\ldots,x_n}$. The {\it Brian\c{c}on-Skoda exponent} of $R/\langle f \rangle$ is the integer 
$$e^{\mathrm{BS}}(f) := \min \{ d \in \mathbb{N} \mid f^d \in J_f \}.$$
\end{Definition}
As a consequence of the work of Brian\c{c}on and Skoda~\cite{BS}, one has $e^{\mathrm{BS}}(f) \leq n$.  

We are interested in the behavior of the ascending chain of ideals
$$
I_f \subseteq \langle J_f : f, f \rangle \subseteq  \langle J_f : f^2, f \rangle \subseteq \cdots \subseteq J_f : f^{e^{\mathrm{BS}}(f)}.
$$

As such, define the {\it Milnor-Tjurina ratio} of $f$ to be  
\[
\beta_f := \max \{ i\geq 0 \mid  \text{ and } \langle J_f : f^i, f \rangle\subseteq  I_f \}.
\]
Note that the definition makes sense since for $i=0$ there is an equality. Also, note that $I_f$ is always contained in the left-hand side of the inclusion, so the intended inclusion is actually an equality.


We will consider the exact sequences 
$$0\lar R/\langle J_f:f^i \rangle\xrightarrow{f}R/\langle J_f:f^{i-1} \rangle\lar R/\langle J_f:f^{i-1},f\rangle\lar 0$$
for $i=1,\ldots, e^{\mathrm{BS}}(f),$ where terms are of finite length.  Hence, a telescopic sum yields
\begin{eqnarray}\label{lengthsum}
	\lambda\left(\frac{R}{J_f}\right)&=&\lambda\left(\frac{R}{I_f}\right)
	+\lambda\left(\frac{R}{\langle J_f:f, f\rangle }\right)
	+\cdots+\lambda\left(\frac{R}{\langle J_f:f^{e^{\mathrm{BS}}(f)-2}, f \rangle}\right)\\ \nonumber
	&+&\lambda\left(\frac{R}{J_f:f^{e^{\mathrm{BS}}(f)-1}}\right).
\end{eqnarray}
According to the definition of $\beta_f$, $I_f=((J_f:f),f)=\cdots=((J_f:f^{\beta_f}),f)$, hence the first $\beta_f+1$ terms of the above summation are the same. Thus, we get  
\begin{equation}\label{lengthsum2}
	\lambda\left(\frac{R}{J_f}\right)=(\beta_f+1)\lambda\left(\frac{R}{I_f}\right)+\sum_{i=1}^{e^{\mathrm{BS}}(f)-\beta_f-1} \lambda\left(\frac{R}{\langle J_f:f^{\beta_f+i}, f \rangle} \right).
\end{equation} 

The theorem below establishes several inequalities and, in particular, extends a result of Dimca-Greuel in \cite{DG} to arbitrary dimension.  
  
\begin{Theorem}\label{bounds}  Let $R/\langle f \rangle$ define an isolated hypersurface singularity, where $f\in R=\pot{k}{x_1,\ldots,x_n}$. Assume that $f$ is not quasihomogeneous {\rm (}in the sense of {\rm Definition~\ref{quasihomogeneous_redefined}}{\rm )}. Then
\begin{enumerate}
\item[{\rm (i)}] $\beta_f\leq  e^{\mathrm{BS}}(f)-2$. 
\item[{\rm (ii)}] $\mu_f\leq e^{\mathrm{BS}}(f)\tau_f-(e^{\mathrm{BS}}(f)-\beta_f-1)\leq e^{\mathrm{BS}}(f)\tau_f-1<n\tau_f$.
\item[{\rm (iii)}] $\mu_f\geq (\beta_f+1)\tau_f+2(e^{\mathrm{BS}}(f)-\beta_f-2)+1\geq \tau_f+1$.
\item[{\rm (iv)}] $1< \mu_f/\tau_f<e^{\mathrm{BS}}(f).$
\end{enumerate}
\end{Theorem}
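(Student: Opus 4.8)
The plan is to extract everything from the length formula~\eqref{lengthsum2}, which already packages the telescoping sum of the ascending chain $I_f\subseteq\langle J_f:f,f\rangle\subseteq\cdots$ into a single identity relating $\mu_f=\lambda(R/J_f)$, $\tau_f=\lambda(R/I_f)$, the invariant $\beta_f$, and the Brian\c con--Skoda exponent $e^{\mathrm{BS}}(f)$. The first move is to establish (i): since $f$ is not quasihomogeneous, $f\notin J_f$, so $\langle J_f:f^{e^{\mathrm{BS}}(f)-1}\rangle$ is a \emph{proper} ideal (otherwise $f^{e^{\mathrm{BS}}(f)-1}\in J_f$, contradicting minimality of $e^{\mathrm{BS}}(f)$), hence $\lambda(R/\langle J_f:f^{e^{\mathrm{BS}}(f)-1}\rangle)\geq 1$; moreover $\langle J_f:f^{e^{\mathrm{BS}}(f)-1},f\rangle$ is also proper, so that last ``collapsed'' term is not among the first $\beta_f+1$ equal terms, which forces $\beta_f+1\leq e^{\mathrm{BS}}(f)-1$, i.e. $\beta_f\leq e^{\mathrm{BS}}(f)-2$. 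In particular the summation index range in~\eqref{lengthsum2} is nonempty exactly when $\beta_f<e^{\mathrm{BS}}(f)-1$, which is what (i) guarantees.

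Next, for (ii) and (iii) I would bound the remaining terms $\lambda(R/\langle J_f:f^{\beta_f+i},f\rangle)$ in~\eqref{lengthsum2} from above and below. Each such ideal strictly contains $I_f$ (by definition of $\beta_f$, for indices $\beta_f+i>\beta_f$ the inclusion $\langle J_f:f^{\beta_f+i},f\rangle\subseteq I_f$ fails, so combined with the always-valid reverse containment it is a strict overideal of $I_f$), hence $\lambda(R/\langle J_f:f^{\beta_f+i},f\rangle)\leq\tau_f-1$; summing over the $e^{\mathrm{BS}}(f)-\beta_f-1$ terms and plugging into~\eqref{lengthsum2} gives $\mu_f\leq(\beta_f+1)\tau_f+(e^{\mathrm{BS}}(f)-\beta_f-1)(\tau_f-1)=e^{\mathrm{BS}}(f)\tau_f-(e^{\mathrm{BS}}(f)-\beta_f-1)$, which is the first inequality of (ii); the second follows since $\beta_f\leq e^{\mathrm{BS}}(f)-2$ makes $e^{\mathrm{BS}}(f)-\beta_f-1\geq 1$, and the strict inequality $e^{\mathrm{BS}}(f)\tau_f-1<n\tau_f$ follows from $e^{\mathrm{BS}}(f)\leq n$ (Brian\c con--Skoda) together with $\tau_f\geq 1$. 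For (iii), I bound below: the first $\beta_f+1$ terms in the original telescoping sum each equal $\tau_f\geq 1$; among the remaining $e^{\mathrm{BS}}(f)-\beta_f-1$ terms, one is the terminal term $\lambda(R/\langle J_f:f^{e^{\mathrm{BS}}(f)-1}\rangle)\geq 1$ and the other $e^{\mathrm{BS}}(f)-\beta_f-2$ terms are lengths of quotients by ideals strictly between $I_f$ and $R$ that also strictly contain their predecessor in the chain, so a two-sided squeeze argument on a strictly ascending chain of $\langle x_1,\dots,x_n\rangle$-primary ideals gives each of them length at least... here I need the chain $I_f\subsetneq\langle J_f:f^{\beta_f+1},f\rangle\subsetneq\cdots$ to be strictly increasing, which is the delicate point (see below); granting it, the middle terms contribute at least $1$ each but I want $2$, so I should instead argue that $\lambda(R/J_f)-\lambda(R/I_f)\geq$ (number of strict steps in the full chain from $I_f$ up to $R$), counting multiplicity, and organize the arithmetic to land on $(\beta_f+1)\tau_f+2(e^{\mathrm{BS}}(f)-\beta_f-2)+1$. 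Finally (iv) is immediate by dividing (ii) and (iii) by $\tau_f>0$: the lower bound $\mu_f\geq\tau_f+1$ gives $\mu_f/\tau_f>1$, and $\mu_f<n\tau_f$ with $n\leq\,$? — actually we need $\mu_f/\tau_f<e^{\mathrm{BS}}(f)$, which comes directly from the first inequality in (ii) since $e^{\mathrm{BS}}(f)-\beta_f-1\geq 1>0$.

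The main obstacle I anticipate is justifying the \emph{strictness} of the ascending chain $I_f\subseteq\langle J_f:f,f\rangle\subseteq\cdots\subseteq\langle J_f:f^{e^{\mathrm{BS}}(f)-1},f\rangle\subseteq R$ at the steps I need it, and more precisely extracting the sharper lower bound ``$+2$'' per intermediate step in (iii) rather than just ``$+1$''. If $\langle J_f:f^{j},f\rangle=\langle J_f:f^{j+1},f\rangle$ for some $j$ in the relevant range, the naive count degrades; one needs to show that once two consecutive members of the chain coincide the chain stabilizes, so that the number of strict inclusions is controlled by $e^{\mathrm{BS}}(f)$ and $\beta_f$, and then that each genuinely strict inclusion of $\langle x_1,\dots,x_n\rangle$-primary ideals whose quotient is \emph{not} the residue field contributes length $\geq 2$ — this last refinement is what powers the coefficient $2$ in (iii) and is where I expect to spend the real effort. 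An alternative, cleaner route for (iii) is to run the telescoping sum~\eqref{lengthsum} directly without collapsing, bound the terminal term $\lambda(R/\langle J_f:f^{e^{\mathrm{BS}}(f)-1}\rangle)\geq 1$ and each of the $\beta_f+1$ leading terms by $\tau_f$ exactly, and bound the middle terms below using that they are lengths over a chain that strictly ascends from a proper overideal of $I_f$ up toward $R$; I would pursue whichever of these bookkeeping schemes makes the stated arithmetic come out on the nose.
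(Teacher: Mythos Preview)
Your arguments for (ii) and (iv) are fine and match the paper's approach. However, there are genuine gaps in (i) and (iii).

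\medskip

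\textbf{Part (i).} Your argument only establishes $\beta_f\leq e^{\mathrm{BS}}(f)-1$, not $\beta_f\leq e^{\mathrm{BS}}(f)-2$. You note that $\langle J_f:f^{e^{\mathrm{BS}}(f)-1},f\rangle$ is a proper ideal and conclude that ``that last collapsed term is not among the first $\beta_f+1$ equal terms.'' But the first $\beta_f+1$ terms all equal $\lambda(R/I_f)$, and $I_f$ is \emph{also} proper, so properness alone does not rule out $\langle J_f:f^{e^{\mathrm{BS}}(f)-1},f\rangle=I_f$. What you actually need is to show $J_f:f^{e^{\mathrm{BS}}(f)-1}\not\subseteq I_f$, and this requires real work. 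The paper argues by contradiction: if $\beta_f=e^{\mathrm{BS}}(f)-1$ then $I_f=J_f:f^{e^{\mathrm{BS}}(f)-1}$, so $I_f/J_f$ (which is cyclic, generated by the class of $f$) is the canonical module of $R/\langle J_f,f^{e^{\mathrm{BS}}(f)-1}\rangle$. This forces the latter ring to be Gorenstein, hence by Kunz a complete intersection. Then Teissier's theorem ($f\in\overline{J_f}$) makes $J_f$ a reduction of $\langle J_f,f^{e^{\mathrm{BS}}(f)-1}\rangle$, and a multiplicity comparison yields $f^{e^{\mathrm{BS}}(f)-1}\in J_f$, contradicting the minimality of $e^{\mathrm{BS}}(f)$. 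None of this machinery appears in your sketch.

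\medskip

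\textbf{Part (iii).} You correctly identify that the coefficient $2$ is the crux, but your proposed route via strict ascent of the chain $\langle J_f:f^j,f\rangle$ is both unproven (you acknowledge this) and unnecessary. The paper's argument is direct and uses a feature of the situation you have not invoked: since $f$ defines a singularity at the origin, $f\in\mathfrak{m}^2$. Now suppose some intermediate term had $\lambda(R/\langle J_f:f^i,f\rangle)=1$ for $i<e^{\mathrm{BS}}(f)-1$; then $\langle J_f:f^i,f\rangle=\mathfrak{m}$. But $f\in\mathfrak{m}^2$ cannot contribute a minimal generator of $\mathfrak{m}$, so already $J_f:f^i=\mathfrak{m}$, whence $f\in J_f:f^i$, i.e.\ $f^{i+1}\in J_f$ with $i+1<e^{\mathrm{BS}}(f)$ --- a contradiction. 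Thus every term with exponent $<e^{\mathrm{BS}}(f)-1$ contributes at least $2$, the terminal term contributes at least $1$, and \eqref{lengthsum2} gives the stated bound.
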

\begin{proof}
(i) As $J_f : f^{e^{\mathrm{BS}}(f)} = \langle 1\rangle =R$ and $R/\langle f \rangle$ is not smooth, it follows that $\beta_f \leq e^{\mathrm{BS}}(f) - 1$.  

Now, assume for the sake of contradiction that $\beta_f > e^{\mathrm{BS}}(f) - 2$.
Then we must have $\beta_f = e^{\mathrm{BS}}(f) - 1$ and 
 $$I_f = J_f : f^{e^{\mathrm{BS}}(f)-1},$$ 
 where $e^{\mathrm{BS}}(f)-1\neq 0$ since $f$ is not quasihomogeneous by assumption.

Consider the ideal  
\[
I' := \langle J_f , f^{e^{\mathrm{BS}}(f)-1} \rangle.
\]  
The canonical module of $R/I'$ is isomorphic to $\langle J_f : f^{e^{\mathrm{BS}}(f)-1} \rangle / J_f= I_f / J_f$ as $R$-modules. The latter is a cyclic module generated by the residue class of $f$. Consequently, the Artinian ring $R/I'$ is Gorenstein.  

On the other hand, $I'$ is minimally generated by at most $n+1$ elements. By Kunz theorem (\cite{Kunz}), this implies that $I'$ is a complete intersection. By Theorem~\ref{Tessier},  $f \in \overline{J_f}$. In particular,  
\[
f^{e^{\mathrm{BS}}(f)-1} \in \overline{J_f}.
\]  
Thus, $J_f$ is a reduction of $I'$, hence  $e(J_f,R)=e(I',R)$. Since $\dim(R/J_f)=\dim(R/I')=0$, it follows that $\lambda(R/J_f)=\lambda(R/I')$. Therefore $J_f=I'$. In particular, $f^{e^{\mathrm{BS}}(f)-1}\in J_f$.

This contradicts the definition of $e^{\mathrm{BS}}(f)$, thus completing the proof.

\medskip

(ii) For $i=1,\ldots,e^{\mathrm{BS}}(f)-\beta_f-1$, one has a strict inclusion $I_f\subsetneq \langle J_f:f^{\beta_f+i},f \rangle$. Therefore, $\lambda(R/I_f)-\lambda(R/\langle J_f:f^{\beta_f+i},f \rangle\geq 1$. Substituting for the main players in the right side of the first claimed inequality in (ii) validates it. The second inequality in (ii) follows from (i), and the third inequality is a consequence of the Brian\c{c}on-Skoda upper bound $e^{\mathrm{BS}}(f)\leq n$.

\medskip

(iii)  The result follows from Equation (\ref{lengthsum2}) and part $(i)$ provided it is shown that, for $i<e^{\mathrm{BS}}(f)-1$, one has
$$\lambda(R/\langle J_f:f^{i}, f \rangle\geq 2.$$
Assuming otherwise, $\lambda(R/\langle J_f:f^{i}, f\rangle=1$, i.e., $\langle J_f:f^{i}, f\rangle = \langle x_1,\ldots,x_n\rangle$. Since $f$ is not smooth at the null point, its initial degree as a power series is at least $2$. Therefore,  $\langle x_1,\ldots,x_n\rangle=J_f:f^{i}$. Clearly, then $f\in J_f:f^{i}$, which contradicts the definition of $e^{\mathrm{BS}}(f)$.  
\end{proof}
The next Corollary rounds up a few cases of small Milnor--Tjurina number differences.
\begin{Corollary}\label{smalldifference} With the same notation and hypotheses of {\rm Theorem~\ref{bounds}}, one has$:$
\begin{itemize}
\item[{\rm (i)}] If $\mu_f-\tau_f\leq 2$ then $e^{\mathrm{BS}}(f)=2$.
\item[{\rm (ii)}]  If $\mu_f-\tau_f\leq e^{\mathrm{BS}}(f)-1$ then $e^{\mathrm{BS}}(f)=2$.
\item[{\rm (iii)}]  If $\mu_f-\tau_f= 3$ then $e^{\mathrm{BS}}(f)\leq 3$ and either  $(\beta_f,\tau_f)=(1,2)$ or else $\beta_f=0$.
\item[{\rm (iv)}] If $\mu_f-\tau_f= 4$ then $e^{\mathrm{BS}}(f)\leq 3$ and either $(\beta_f,e^{\mathrm{BS}}(f),\tau_f)=(1,3,3)$ or $(1,3,2)$, or else $\beta_f=0$.
\end{itemize}
\end{Corollary}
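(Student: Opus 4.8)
The plan is to deduce everything from two inputs recorded earlier: the inequalities of Theorem~\ref{bounds}, and the fact that $\tau_f \geq 2$ whenever $f$ is not quasihomogeneous. For the latter I would observe that $\tau_f = 1$ would force the Tjurina algebra $T_f = R/I_f$ to be a field, hence Artinian Gorenstein, and then Theorem~\ref{QuasiHomogeneous}, $(b)\Rightarrow(a)$, would make $f$ quasihomogeneous, against the standing hypothesis; so $\tau_f \geq 2$. (The same conclusion follows from the formal Morse lemma.) I will also use, throughout, that $e^{\mathrm{BS}}(f) \geq 2$, since $e^{\mathrm{BS}}(f) = 1$ means $f \in J_f$, which is quasihomogeneity by Theorem~\ref{Saito_original} and Definition~\ref{quasihomogeneous_redefined}.

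Next I would rewrite Theorem~\ref{bounds}(iii) in a convenient form, abbreviating $e := e^{\mathrm{BS}}(f)$ and $\beta := \beta_f$:
\[
\mu_f - \tau_f \;\geq\; \beta(\tau_f - 2) + 2e - 3 .
\]
Since $\tau_f \geq 2$ and $\beta \geq 0$, the first summand is nonnegative, so this yields the clean estimate $\mu_f - \tau_f \geq 2e - 3$, equivalently $e \leq \tfrac12(\mu_f - \tau_f + 3)$. Together with $\beta \leq e - 2$ from Theorem~\ref{bounds}(i), these are the only two facts needed.

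The four items then reduce to short arithmetic. For (i): $\mu_f - \tau_f \leq 2$ gives $2e - 3 \leq 2$, so $e \leq 2$, and as $e \geq 2$ we get $e = 2$. For (ii): combining $\mu_f - \tau_f \leq e - 1$ with $\mu_f - \tau_f \geq 2e - 3$ gives $2e - 3 \leq e - 1$, hence $e \leq 2$ and again $e = 2$. For (iii): $\mu_f - \tau_f = 3$ forces $e \leq 3$, hence $\beta \leq e - 2 \leq 1$; if $\beta = 1$ then also $e \geq \beta + 2 = 3$, so $e = 3$, and substituting $\beta = 1$, $e = 3$ into the displayed bound gives $3 \geq (\tau_f - 2) + 3$, i.e.\ $\tau_f \leq 2$, whence $\tau_f = 2$; thus either $\beta = 0$ or $(\beta, \tau_f) = (1,2)$. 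For (iv): $\mu_f - \tau_f = 4$ forces $2e - 3 \leq 4$, so $e \leq 3$, hence again $\beta \leq 1$; if $\beta = 1$ then $e = 3$ exactly as in (iii), and the displayed bound reads $4 \geq (\tau_f - 2) + 3$, so $\tau_f \leq 3$, i.e.\ $\tau_f \in \{2,3\}$; thus either $\beta = 0$ or $(\beta, e, \tau_f) \in \{(1,3,2),(1,3,3)\}$.

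I do not expect a genuine obstacle here: the substance is all in Theorem~\ref{bounds}, and what remains is bookkeeping. The one point deserving care is the reduction $\tau_f \geq 2$ — without it the key estimate $\mu_f - \tau_f \geq 2e - 3$ can fail — and, in items (iii) and (iv), the interplay between the upper bound $e \leq 3$ coming from $\mu_f - \tau_f \leq 4$ and the lower bound $e \geq \beta + 2$, which together force $e = 3$ in the subcases $\beta = 1$ and exclude $\beta \geq 2$ outright. The upper-bound inequalities of Theorem~\ref{bounds}(ii) are not needed.
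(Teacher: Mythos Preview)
Your proof is correct and follows essentially the same approach as the paper: both arguments rest on the rewriting of Theorem~\ref{bounds}(iii) as $\mu_f-\tau_f\geq \beta_f(\tau_f-2)+2e^{\mathrm{BS}}(f)-3$, the observation $\tau_f\geq 2$ (which yields the clean bound $\mu_f-\tau_f\geq 2e^{\mathrm{BS}}(f)-3$), and the inequality $\beta_f\leq e^{\mathrm{BS}}(f)-2$ from Theorem~\ref{bounds}(i). Your write-up is in fact more complete than the paper's terse proof, since you supply the justification for $\tau_f\geq 2$, make explicit the lower bound $e^{\mathrm{BS}}(f)\geq 2$, and carry out the case analysis for (iii) and (iv) in full.
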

\begin{proof} All parts are consequences of Theorem~\ref{bounds}(iii) and the fact that $\beta_f\leq e^{\mathrm{BS}}(f)-2$. Namely, we have 
$$\mu_f-\tau_f\geq \beta_f\tau_f+2(e^{\mathrm{BS}}(f)-\beta_f-2)+1.$$
Since, $f$ is not quasihomogeneous by assumption, $\tau_f\geq 2$.  Then $\mu_f-\tau_f\geq 2e^{\mathrm{BS}}(f)-3.$ 
\end{proof}

So far, we have found no evidence to the effect that the invariant $\beta_f$ is nonzero, i.e., that if $f \in R = \pot{k}{x_1,\ldots,x_n}$ defines an isolated hypersurface singularity, the {\it Euler conductor} $J_f:f$ is not contained in the ideal $I_f$. We set:

\begin{Conjecture}\label{conject}  
Let $R/\langle f \rangle$ define an isolated hypersurface singularity, with $f \in R = \pot{k}{x_1,\ldots,x_n}$, where $k$ is a field of characteristic zero. Then,  $J_f:f \not\subset I_f.$
\end{Conjecture}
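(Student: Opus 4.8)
We may assume $f$ is not quasihomogeneous, since otherwise $f\in J_f$ forces $J_f:f=R$, which is visibly not contained in the proper ideal $I_f$. The plan is then to pass to the Milnor algebra $M_f=R/J_f$ and exploit its Gorenstein structure. Because $f$ has an isolated singularity, the partials form a regular sequence, so $M_f$ is an Artinian complete intersection, in particular Gorenstein, and hence enjoys the double–annihilator property $0:_{M_f}\!\bigl(0:_{M_f}\mathfrak b\bigr)=\mathfrak b$ for every ideal $\mathfrak b\subseteq M_f$. Writing overlines for residues modulo $J_f$, one has $I_f/J_f=\bar f\,M_f$ and $(J_f:f)/J_f=0:_{M_f}\bar f=\Ann_{M_f}(\bar f\,M_f)$, so these two ideals are mutual annihilators in $M_f$. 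Therefore
\[
J_f:f\ \subseteq\ I_f\quad\Longleftrightarrow\quad (J_f:f)/J_f\ \subseteq\ 0:_{M_f}\!\bigl((J_f:f)/J_f\bigr)\quad\Longleftrightarrow\quad (J_f:f)^2\ \subseteq\ J_f .
\]
Thus Conjecture~\ref{conject} is equivalent to the assertion that the \emph{square} of the Euler conductor escapes the gradient ideal: $(J_f:f)^2\not\subseteq J_f$. I would adopt this reformulation as the working form of the problem.

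It is illuminating to restate it once more via linear algebra. The operator $\phi:=(\,\cdot\,\bar f):M_f\to M_f$ is nilpotent with nilpotency index $e^{\mathrm{BS}}(f)\le n$, and $\ker\phi=(J_f:f)/J_f$, $\operatorname{im}\phi=I_f/J_f=\Ann_{M_f}(\ker\phi)$. Decomposing $M_f$ over $k[\phi]$ into Jordan blocks, there are exactly $\tau_f$ of them (the nullity of $\phi$ equals $\dim_k\operatorname{coker}\phi=\tau_f$), the sum of their sizes is $\mu_f$, and the largest has size $e^{\mathrm{BS}}(f)$. Now $J_f:f\subseteq I_f$ holds if and only if $\ker\phi\subseteq\operatorname{im}\phi$, which in turn holds if and only if \emph{every} Jordan block of $\phi$ has size at least $2$; consequently, if $J_f:f\subseteq I_f$ then $\mu_f\ge 2\tau_f$. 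For $n=2$ this contradicts the Dimca--Greuel bound $\mu_f<2\tau_f$ (\cite{DG}, or Theorem~\ref{bounds}(ii)), which settles the conjecture in that case. Hence I would dispose of $n=2$ first and then concentrate on $n\ge 3$, where the problem becomes: multiplication by $f$ on the Milnor algebra has at least one Jordan block of size one. Here one may further reduce, via the splitting lemma, to the case $\ord(f)\ge 3$: if the Hessian of $f$ has corank $\le 1$ then $f$ is quasihomogeneous, and in general $f$ is formally equivalent to $\sum_i x_i^2+g$ with $\ord(g)\ge 3$, an operation under which $M_f$, $\bar f$, and hence the whole question are unchanged.

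Two avenues then present themselves for $n\ge 3$. The first is a tangent-cone estimate: with $m=\ord(f)\ge 3$ one has $I_f\subseteq\mathfrak m^{m-1}$, so it suffices to produce an element of $J_f:f$ of order $\le m-2$; one has the structural fact $\ini(f)\in\ini(J_f)$ — because $m\,\ini(f)$ is the degree-$m$ homogeneous component of $\sum_i x_i\,\ini(f_{x_i})\in\ini(J_f)$ — which says $\bar f$ always admits a representative of order $>m$, and one would try to leverage this to lower the order of a suitable conductor element. The second avenue is to sharpen the only general comparison available, $\mu_f<n\tau_f$, to a dimension-free bound of the shape $\mu_f<2\tau_f+(\text{a term controlled by }e^{\mathrm{BS}}(f))$, which would force a size-one block outright and would simultaneously close the gap left open in Theorem~\ref{bounds}(iii) for $n\ge 3$. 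The main obstacle is precisely that this bound is currently missing: $\mu_f<n\tau_f$ permits $\mu_f$ to exceed $\tau_f$ by nearly $(n-1)\tau_f$, which is entirely compatible with all Jordan blocks of $\cdot\,\bar f$ having size $\ge 2$. Breaking the impasse seems to demand a genuinely new input about the pair $(J_f,f)$ going beyond the integral dependence of Theorem~\ref{Tessier} — for instance a constraint from the Euler field $\sum_i x_i\partial_{x_i}$ against $J_f$, the leading-form relation $\ini(f)\in\ini(J_f)$ special to gradient ideals, or a finer reading of the Gorenstein pairing restricted to the conductor $(J_f:f)/J_f$ — and I expect supplying this to be the hard step, consistent with the statement being recorded here only as a conjecture.
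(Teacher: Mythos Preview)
You have correctly recognized that this statement is recorded in the paper as a \emph{conjecture}, not a theorem; the paper proves it only for $n=2$ (Proposition~\ref{bounds_conjectured}(i)) and otherwise offers only equivalent reformulations (Proposition~\ref{equivaleceConjecture}). Your proposal is honest about this and does not overclaim a general proof.

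For $n=2$ your argument is correct but follows a genuinely different route from the paper's. The paper deduces $\beta_f=0$ from the inequality $\beta_f\le e^{\mathrm{BS}}(f)-2$ of Theorem~\ref{bounds}(i) together with the Brian\c{c}on--Skoda bound $e^{\mathrm{BS}}(f)\le 2$. You instead pass to the Milnor algebra, observe that $J_f:f\subseteq I_f$ forces every Jordan block of multiplication by $\bar f$ on $M_f$ to have size at least $2$ (hence $\mu_f\ge 2\tau_f$), and contradict the Dimca--Greuel bound $\mu_f<2\tau_f$. Both arguments are short; yours has the advantage of producing, along the way, the clean reformulation $J_f:f\subseteq I_f\Longleftrightarrow (J_f:f)^2\subseteq J_f$, which is \emph{not} among the equivalences listed in Proposition~\ref{equivaleceConjecture} and may be of independent interest. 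The paper's route, on the other hand, isolates the general obstruction $\beta_f\le e^{\mathrm{BS}}(f)-2$, which carries information in all dimensions.

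For $n\ge 3$ your Jordan-block picture and the reduction via the splitting lemma to $\ord(f)\ge 3$ are correct and potentially useful, but --- as you yourself note --- they do not close the gap: the available inequality $\mu_f<n\tau_f$ is too weak to rule out all blocks having size $\ge 2$. So the general conjecture remains open in your proposal just as it does in the paper.
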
  

\begin{Proposition}\label{bounds_conjectured}  Let $R/\langle f \rangle$ define an isolated hypersurface singularity, $f\in R=\pot{k}{x_1,\ldots,x_n}$. Assume that $f$ is not quasihomogeneous {\rm (}in the sense of {\rm Definition~\ref{quasihomogeneous_redefined}}{\rm )}. Then$:$ 
\begin{enumerate}
\item[{\rm (i)}] {\rm Conjecture~\ref{conject}} holds for $n=2$.

If {\rm Conjecture~\ref{conject}} holds for $n$ then$:$
\smallskip
\item[{\rm (ii)}] $\mu_f\leq e^{\mathrm{BS}}(f)\tau_f-(e^{\mathrm{BS}}(f)-1)= e^{\mathrm{BS}}(f)(\tau_f-1)+1$.
\smallskip
\item[{\rm (iii)}] $\mu_f\geq \tau_f+2e^{\mathrm{BS}}(f)-3$.
\end{enumerate}
\end{Proposition}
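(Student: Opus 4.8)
The plan is to handle the three parts largely by adapting the machinery already assembled for Theorem~\ref{bounds}, now under the extra hypothesis that $J_f:f\not\subset I_f$ (Conjecture~\ref{conject} for the relevant $n$), which is exactly the statement that $\beta_f=0$. So the first step is to translate the hypothesis into the clean invariant-theoretic fact $\beta_f=0$: indeed $I_f\subseteq\langle J_f:f,f\rangle$ always, and if $J_f:f\not\subset I_f$ then this inclusion is strict, so by definition of $\beta_f$ we cannot have $\beta_f\geq 1$, forcing $\beta_f=0$. (Conversely $\beta_f=0$ is equivalent to the conjecture's conclusion, so the two are interchangeable.)

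For part~(i), the claim is that the conjecture holds when $n=2$. Here I would invoke Theorem~\ref{bounds}(i): since $e^{\mathrm{BS}}(f)\leq n=2$ by Brian\c{c}on--Skoda, and $f$ non-quasihomogeneous forces $e^{\mathrm{BS}}(f)\geq 2$, we get $e^{\mathrm{BS}}(f)=2$; then $\beta_f\leq e^{\mathrm{BS}}(f)-2=0$, hence $\beta_f=0$, which is precisely $J_f:f\not\subset I_f$. This is essentially a one-line consequence of what is already proved, so I expect no difficulty there.

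For parts~(ii) and~(iii), assuming the conjecture for the ambient $n$, I substitute $\beta_f=0$ into the bounds of Theorem~\ref{bounds}(ii)--(iii). Part~(ii) of \emph{that} theorem reads $\mu_f\leq e^{\mathrm{BS}}(f)\tau_f-(e^{\mathrm{BS}}(f)-\beta_f-1)$; setting $\beta_f=0$ gives $\mu_f\leq e^{\mathrm{BS}}(f)\tau_f-(e^{\mathrm{BS}}(f)-1)$, and the trivial algebraic rearrangement $e^{\mathrm{BS}}(f)\tau_f-e^{\mathrm{BS}}(f)+1=e^{\mathrm{BS}}(f)(\tau_f-1)+1$ yields the stated equality, proving~(ii). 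Similarly Theorem~\ref{bounds}(iii) gives $\mu_f\geq(\beta_f+1)\tau_f+2(e^{\mathrm{BS}}(f)-\beta_f-2)+1$; with $\beta_f=0$ this becomes $\mu_f\geq\tau_f+2(e^{\mathrm{BS}}(f)-2)+1=\tau_f+2e^{\mathrm{BS}}(f)-3$, which is~(iii).

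Since parts (ii) and (iii) are pure specializations of the already-established Theorem~\ref{bounds}, there is really no substantive obstacle in the proof itself; the only genuinely new content is part~(i), and even there the work has effectively been done in Theorem~\ref{bounds}(i) via the Gorenstein/complete-intersection argument and the Brian\c{c}on--Skoda bound $e^{\mathrm{BS}}(f)\leq n$. The one point to be careful about is the bookkeeping identification ``$J_f:f\not\subset I_f\iff\beta_f=0$'', making sure the definition of $\beta_f$ (max $i$ with $\langle J_f:f^i,f\rangle\subseteq I_f$, equivalently $=I_f$) is applied correctly at $i=1$; that is the only place where a misreading could creep in.
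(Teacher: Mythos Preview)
Your proposal is correct and follows exactly the paper's approach: the paper's proof is the terse two-liner ``(i) follows from Theorem~\ref{bounds}(i) in conjunction with the Brian\c{c}on--Skoda theorem; (ii) and (iii) both follow from the hypothesis drawing again upon Theorem~\ref{bounds},'' and you have simply unpacked this, correctly making explicit the equivalence $\beta_f=0\iff J_f:f\not\subset I_f$ and the substitution of $\beta_f=0$ into the bounds of Theorem~\ref{bounds}(ii)--(iii).
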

\begin{proof}
(i) It follows from Theorem~\ref{bounds}(i) in conjunction with the Brian\c{c}on--Skoda theorem.

(ii) and (iii) both follow from the hypothesis drawing again upon Theorem~\ref{bounds}.
\end{proof}

Next are equivalent formulations of the failure of Conjecture~\ref{conject}.
\begin{Proposition}\label{equivaleceConjecture}  Let $R/\langle f \rangle$ define an isolated hypersurface singularity, $f\in R=\pot{k}{x_1,\ldots,x_n}$. The following are equivalent
\begin{enumerate}
 \item[{\rm (i)}] $J_f:f \subseteq I_f$. 
\item[{\rm (ii)}] $\lambda(R/\langle J_f,f^2\rangle)=2\tau_f$.
\item[{\rm (iii)}] $I_f^2:I_f=I_f$.
\item[{\rm (iv)}] $R/I_f$ is isomorphic to the kernel of the natural surjection $R/\langle J_f,f^2\rangle\surjects R/I_f$ as $R$-modules.
\end{enumerate}
\end{Proposition}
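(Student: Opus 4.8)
The plan is to make (i) the hub. First note that $J_f\subseteq J_f:f$ forces $I_f=\langle J_f,f\rangle\subseteq\langle J_f:f,f\rangle$, so (i) is simply the equality $\langle J_f:f,f\rangle=I_f$. I would then record two elementary computations. First, the kernel of the natural surjection $R/\langle J_f,f^2\rangle\twoheadrightarrow R/I_f$ equals $I_f/\langle J_f,f^2\rangle$, which is cyclic, generated by the class of $f$, hence isomorphic to $R/(\langle J_f,f^2\rangle:f)$; and $\langle J_f,f^2\rangle:f=\langle J_f:f,f\rangle$ (if $af=j+cf^2$ with $j\in J_f$, then $(a-cf)f=j\in J_f$; the reverse inclusion is clear). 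Second, $I_f^2=J_fI_f+f^2R$, immediate from $I_f=J_f+fR$ and $fJ_f\subseteq J_fI_f$. All quotients that occur have finite length, since $I_f$ (hence $\langle J_f,f^2\rangle$ and $\langle J_f:f,f\rangle$) is $\mathfrak{m}$-primary.

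Granting this, (i)$\Leftrightarrow$(iv) and (i)$\Leftrightarrow$(ii) are formal. For (iv): over a local ring two cyclic modules $R/A$ and $R/B$ are isomorphic if and only if $A=B$ (an isomorphism sends $\overline 1$ to a generator of $R/B$, necessarily a unit times $\overline 1$, and comparing annihilators gives $A=B$); by the first computation (iv) says $R/I_f\cong R/\langle J_f:f,f\rangle$, i.e. $I_f=\langle J_f:f,f\rangle$, i.e. (i). For (ii): the short exact sequence $0\to I_f/\langle J_f,f^2\rangle\to R/\langle J_f,f^2\rangle\to R/I_f\to 0$ gives $\lambda(R/\langle J_f,f^2\rangle)=\tau_f+\lambda(R/\langle J_f:f,f\rangle)$, and since $I_f\subseteq\langle J_f:f,f\rangle$ with both $\mathfrak{m}$-primary, $\lambda(R/\langle J_f:f,f\rangle)\le\tau_f$ with equality exactly when $I_f=\langle J_f:f,f\rangle$; so the requirement $\lambda(R/\langle J_f,f^2\rangle)=2\tau_f$ is equivalent to (i).

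There remains (iii). For (i)$\Rightarrow$(iii) a short colon chase suffices: if $a\in I_f^2:I_f$ then $af\in I_f^2=J_fI_f+f^2R$, say $af=h+cf^2$ with $h\in J_fI_f\subseteq J_f$; then $(a-cf)f=h\in J_f$, so $a-cf\in J_f:f\subseteq I_f$ by (i), whence $a\in I_f$. Thus $I_f^2:I_f\subseteq I_f$, the reverse inclusion being automatic. The direction (iii)$\Rightarrow$(i) is the main obstacle. The same chase produces only the always-valid inclusion $I_f^2:I_f\subseteq\langle J_f:f,f\rangle$, which does not by itself squeeze the two ends $I_f\subseteq I_f^2:I_f\subseteq\langle J_f:f,f\rangle$ together. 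The plan is to argue by contradiction: assuming (i) fails, the image of $J_f:f$ in the Artinian local ring $R/I_f$ is a nonzero ideal, hence meets the socle, yielding $s\in J_f:f$ with $s\notin I_f$ and $\mathfrak{m}s\subseteq I_f$; the goal is then to promote this to $s\in I_f^2:I_f$, contradicting (iii), using $f\in\overline{J_f}$ (cf. Theorem~\ref{Tessier}) together with the failure of $R/I_f$ to be Gorenstein. Completing that last step is the hard part, and it genuinely requires the hypothesis -- standing in this part of the section (cf. Theorem~\ref{bounds}, Proposition~\ref{bounds_conjectured}) -- that $f$ is not quasihomogeneous: for quasihomogeneous $f$ one has $I_f=J_f$, a complete intersection, so $I_f/I_f^2$ is $R/I_f$-free, hence faithful, hence $I_f^2:I_f=I_f$ and (iii) holds, whereas $J_f:f=R\not\subseteq I_f$ so (i) fails. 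I would therefore state the equivalence with (iii) under the explicit assumption that $f$ is not quasihomogeneous, and carry out the contradiction argument above in that setting.
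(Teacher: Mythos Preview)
Your treatment of (i)$\Leftrightarrow$(ii), (i)$\Leftrightarrow$(iv), and (i)$\Rightarrow$(iii) is correct and more direct than the paper's. The key identity $\langle J_f,f^2\rangle:f=\langle J_f:f,f\rangle$ that you isolate (and which also gives $\langle J_f,f^2\rangle:I_f=\langle J_f:f,f\rangle$, since $J_f\subseteq\langle J_f,f^2\rangle$) makes the length comparison for (ii) immediate, whereas the paper routes (i)$\Leftrightarrow$(ii) through the telescoping sums in equations~(\ref{lengthsum}) and~(\ref{I2}). Your proof of (i)$\Rightarrow$(iii) via the colon chase is essentially equivalent to the paper's route (ii)$\Rightarrow$(iii), which first obtains $\langle J_f,f^2\rangle:I_f=I_f$ and then uses $I_f^2\subseteq\langle J_f,f^2\rangle$.

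More importantly, you have uncovered a genuine gap in the paper. The paper's step (iii)$\Rightarrow$(ii) asserts that $I_f^2:I_f=I_f$ ``certainly implies'' $\langle J_f,f^2\rangle:I_f=I_f$; but since $I_f^2\subseteq\langle J_f,f^2\rangle$, the colon inclusion goes the wrong way, and your quasihomogeneous counterexample shows the assertion is false: there $I_f=J_f$ is a complete intersection, so $I_f/I_f^2$ is free of rank $n$ over $R/I_f$, hence faithful, giving (iii); yet $J_f:f=R\not\subseteq I_f$, so (i), (ii), (iv) all fail. Thus the equivalence with (iii) is incorrect as stated, and the paper's proof does not repair it.

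Your proposed remedy --- adding the hypothesis that $f$ is not quasihomogeneous and arguing by contradiction via a socle element $s\in(J_f:f)\setminus I_f$ with $\mathfrak m s\subseteq I_f$ --- is honest about being incomplete, and indeed the promotion of $s$ to $I_f^2:I_f$ is not clear: one needs $sJ_f\subseteq I_f^2$ and $sf\in I_f^2$, and neither follows from $\mathfrak m s\subseteq I_f$ or $sf\in J_f$ alone. Whether (iii)$\Rightarrow$(i) holds under the non-quasihomogeneous assumption appears to remain open; at minimum, the proposition should be amended to assert only (i)$\Leftrightarrow$(ii)$\Leftrightarrow$(iv)$\Rightarrow$(iii).
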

\begin{proof} (i) $\Leftrightarrow$ (ii)  $J_f:f \subseteq I_f$ amounts to $((J_f:f),f)=I_f$. Bringing over equation~(\ref{lengthsum}), we have 
	{\small
\begin{eqnarray}\label{2tau}
\lambda\left(\frac{R}{J_f}\right)&=&2\lambda\left(\frac{R}{I_f}\right)\\ \nonumber
&+&\lambda\left(\frac{R}{\langle J_f:f^2, f\rangle }\right)+\cdots+\lambda\left(\frac{R}{\langle J_f:f^{e^{\mathrm{BS}}(f)-2},f \rangle }\right)+\lambda\left(\frac{R}{ J_f:f^{e^{\mathrm{BS}}(f)-1}}\right).
\end{eqnarray}}
Now consider the exact sequences
\begin{equation}
\begin{aligned}
& 0 \longrightarrow R/ \langle J_f:f^2 \rangle  \xrightarrow{f^2} R/J_f \longrightarrow R/\langle J_f,f^2 \rangle \longrightarrow 0, {\text and}\\
& 0 \longrightarrow R/\langle J_f:f^3 \rangle \xrightarrow{f} R/\langle J_f:f^2 \rangle \longrightarrow R/\langle J_f:f^2, f \rangle \longrightarrow 0
\end{aligned}
\end{equation}
and for $3\leq i\leq e^{\mathrm{BS}}(f)-1$, 
$$0\lar R/\langle J_f:f^{i+1} \rangle\xrightarrow{f}R/\langle J_f:f^i \rangle \lar R/\langle J_f:f^i, f \rangle\lar 0.$$
Calculating length along these sequences yields
{\small
\begin{eqnarray}\label{I2}
\lambda\left(\frac{R}{J_f}\right)&=&\lambda\left(\frac{R}{\langle J_f,f^2\rangle}\right)\\ \nonumber
&+&\lambda\left(\frac{R}{\langle J_f:f^2, f\rangle }\right)+\cdots+\lambda\left(\frac{R}{\langle J_f:f^{e^{\mathrm{BS}}(f)-2},f \rangle }\right)+\lambda\left(\frac{R}{ J_f:f^{e^{\mathrm{BS}}(f)-1} }\right).
\end{eqnarray}
}
Comparing with (\ref{2tau}), one is through.

Conversely,  by confronting equation (\ref{I2}) and equation (\ref{lengthsum}), we get $$\lambda\left(\frac{R}{\langle J_f,f^2 \rangle}\right)=\lambda\left(\frac{R}{I_f}\right)+\lambda\left(\frac{R}{\langle J_f:f, f\rangle }\right).$$
Since $I_f\subseteq \langle J_f:f, f\rangle$, then $\lambda(R/\langle J_f,f^2\rangle )=2\lambda(R/I_f)$ implies   $\langle J_f:f),f \rangle=I_f$, as was to be shown.

(ii) $\Leftrightarrow$ (iii) Consider the following exact sequence induced by multiplication by $f$
\begin{equation}\label{ff^2}
0 \longrightarrow \frac{\langle J_f,f^2 \rangle : I_f}{I_f} \longrightarrow \frac{R}{I_f} \xrightarrow{\cdot f}
\frac{\langle f, J_f,f^2 \rangle}{\langle J_f,f^2 \rangle} =
 \frac{I_f}{\langle J_f,f^2 \rangle} \longrightarrow 0.
\end{equation}  
Now, the natural exact sequence
\begin{equation}\label{natural}
0 \longrightarrow \frac{I_f}{\langle J_f,f^2 \rangle}  \longrightarrow \frac{R}{\langle J_f,f^2 \rangle} \longrightarrow  \frac{R}{I_f} \longrightarrow 0,
\end{equation}
together with  assumption in (ii), imply that 
the two rightmost terms of (\ref{ff^2}) have the same length. Hence, its leftmost term vanishes, that is,  
\begin{equation}\label{intermediate_equality}
\langle J_f,f^2\rangle :I_f = I_f,
\end{equation}

and, for even more reason 
\[
I_f^2:I_f \subseteq  I_f.
\]  
Since the reverse inclusion is obvious, (iii) follows.  
Thus, (ii) $\Rightarrow$ (iii).

Conversely, the equality assumed in (iii)  certainly implies the one in (\ref{intermediate_equality}) which, by (\ref{ff^2}), gives the isomorphism $R/ I_f  \simeq  I_f /\langle J_f,f^2\rangle$, and hence, the equality in (ii).

(iii) $\Leftrightarrow$ (iv) 
As above, (iii) affords the isomorphism $R/I_f  \simeq  I_f  /\langle J_f,f^2\rangle$. Then (\ref{natural}) implies the desired exact sequence. The converse is obvious since (iv) clearly implies (ii), which implies (iii) as already established.
\end{proof}

\section{An application to logarithmic derivations} \label{logarithmic} 
Throughout this section, the base ring will be a polynomial ring. To distinguish from the exhaustive use of the letter $R$ for the ring of formal power series in the previous sections, we change to  $S:=k[x_1,\ldots,x_n]$. 

Let $ \Omega^{i}_{S/k}$ ($i\geq 1$) denote the $S$-module of differential $i$-forms.
Recall that $ \Omega^{1}_{S/k}=\Omega_{S/k}$ is $S$-free on generators $dx_1,\cdots,dx_n$ (a property that fails for the ring of formal power series $R=k[[x_1,\cdots,x_n]]$ -- see \cite[Exercise 16.14 ]{EisenbudBook}). Then,  $ \Omega^{i}_{S/k}:=\bigwedge ^i \Omega_{S/k}$.

The module of $k$-linear derivations of $S$ can be thought of as  $\operatorname{Der}_k(S)\simeq \operatorname{Hom}_{S}(\Omega _{S/k},S)$, the $S$-dual of the module of differential $1$-forms (K\"ahler differentials).
Note that $\operatorname{Der}_k(S)$ is freely generated by the partial derivatives $\partial/\partial x_i,\, 1\leq i\leq n$.
Often such derivations are called {\em vector fields}, an algebraic sin one often sticks to.

In close association, one has the $S$-module of \textit{logarithmic derivations} associated to $f\in S$ as  reviewed in the introduction:
$\mathrm{Derlog}_S\langle f \rangle := \{ \eta \in \operatorname{Der}_k(S) \mid \eta(f) \in fS \}$.  

An $S$-submodule of $\Omega_{S/k}$ of interest in the theory of  plane foliations   is 
$$E_{f}:=\{\omega\in \Omega_{k[x_1,x_2]/k}\,|\,df\wedge \omega\in f\,\Omega^{2}_{k[x_1,x_2]/k} \}\quad \textit{\rm (see \cite{Camacho_et_al})}.$$

One can verify that for $n=2$, one has $\mathrm{Derlog}_S\langle f \rangle\simeq E_{f}$ as $S$-modules. 


Within $\mathrm{Derlog}_S\langle f \rangle$, there exist naturally arising derivations, which we refer to as \textit{Koszul derivations}.  
These are given by the set  
$$\left\{ f_i \frac{\partial}{\partial x_j} - f_j \frac{\partial}{\partial x_i} \mid 1 \leq i, j \leq n \right\}  
\bigcup \left\{ f \frac{\partial}{\partial x_i} \mid 1 \leq i \leq n \right\}$$
where $f_i = \frac{\partial f}{\partial x_i}$.  
The \textit{module of essential derivations} is then defined as  
$$\mathcal{E}_f: = \mathrm{Derlog}_S\langle f \rangle / \text{Koszul derivations}.$$ 

Let $K({\bf f};S)$ denote the Koszul complex over $S$ of the generating sequence ${\bf f}:=\{f, f_1=\partial f/\partial x_1, \ldots, f_n=\partial f/\partial x_n\}$ of $I_f$.
Let $\mathcal{B}(I_f)$ denote the first boundary of this complex and let $\mathcal{Z}(I_f)$ as before stand for its first cycle (i.e., the first syzygy module of ${\bf f})$.
Then, set $\mathcal{H}(I_f):=\mathcal{Z}(I_f)/\mathcal{B}(I_f)$ for the corresponding first Koszul homology module.

There is a natural map  

\begin{align} \nonumber
	\pi: \mathcal{Z}(I_f) \ (\subseteq S^{n+1}) &\longrightarrow \mathrm{Derlog}_S(f) \\ \nonumber
	(a_0,\ldots,a_n) &\mapsto \sum_{i=1}^{n} a_i\frac{\partial}{\partial x_i}.
\end{align}
This map is clearly an $S$-homomorphism.
Let $\bar{\pi}: \mathcal{Z}(I_f) \to \mathcal{E}_f$ denote the induced composite $S$-homomorphism.

\begin{Lemma}\label{E_fisZ_f} With the above notation, one has$:$
	\begin{enumerate}[label=(\roman*)]
		\item[{\rm (i)}]   $\pi$ is an $S$-isomorphism. 
		\item[{\rm (ii)}] $\bar{\pi}$ induces an $S$-isomorphism $\mathcal{H}(I_f)\simeq \mathcal{E}_f$.
	\end{enumerate}
\end{Lemma}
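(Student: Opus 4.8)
The plan is to unwind the definitions of the three modules $\mathcal{Z}(I_f)$, $\mathrm{Derlog}_S\langle f\rangle$, and $\mathcal{E}_f$, and check that the map $\pi$ is a bijection by producing an explicit inverse, then pass to the quotient to get (ii). For (i): a tuple $(a_0,a_1,\ldots,a_n)\in S^{n+1}$ is a syzygy of $\mathbf{f}=\{f,f_1,\ldots,f_n\}$ precisely when $a_0 f + \sum_{i=1}^n a_i f_i = 0$, i.e. $\sum_{i=1}^n a_i f_i = -a_0 f \in fS$; this says exactly that $\eta := \sum_{i=1}^n a_i\,\partial/\partial x_i$ satisfies $\eta(f)\in fS$, so $\pi$ indeed lands in $\mathrm{Derlog}_S\langle f\rangle$ and is well-defined. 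Surjectivity: given $\eta=\sum_{i=1}^n a_i\,\partial/\partial x_i\in\mathrm{Derlog}_S\langle f\rangle$, write $\eta(f)=\sum a_i f_i = b f$ for some $b\in S$; then $(-b,a_1,\ldots,a_n)$ is a syzygy mapping to $\eta$. Injectivity: if $\pi(a_0,\ldots,a_n)=0$ then $a_1=\cdots=a_n=0$, and the syzygy relation forces $a_0 f=0$, whence $a_0=0$ since $S$ is a domain and $f\neq 0$ (here one uses that $f$ is not a zero-divisor; for $f=0$ there is nothing of interest, but in our setting $R/\langle f\rangle$ is a genuine hypersurface). The element $b$ in the surjectivity step is unique for the same reason, so the inverse $\eta\mapsto(-b,a_1,\ldots,a_n)$ is well-defined, and one checks it is $S$-linear; this establishes (i).

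For (ii), I would identify, under the isomorphism $\pi$ of (i), the submodule $\mathcal{B}(I_f)$ of Koszul boundaries with the preimage of the submodule $\mathrm{KD}$ generated by the Koszul derivations. The first Koszul boundaries of $\mathbf{f}=\{f,f_1,\ldots,f_n\}$ are the $S$-linear combinations of the elementary relations $g\,e_a - h\,e_b$ where $\{g,h\}$ ranges over pairs from $\mathbf{f}$; concretely these are the tuples corresponding to $f_i e_j - f_j e_i$ (for $1\le i,j\le n$) and $f\,e_i - f_i\,e_0$ (pairing $f$ with $f_i$). Applying $\pi$: the first family maps to $f_i\,\partial/\partial x_j - f_j\,\partial/\partial x_i$, and the second maps to $f\,\partial/\partial x_i$ (the $e_0$-component is discarded by $\pi$). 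These are precisely the generators of $\mathrm{KD}$. Hence $\pi(\mathcal{B}(I_f)) = \mathrm{KD}$, and since $\pi$ is an isomorphism carrying $\mathcal{Z}(I_f)$ onto $\mathrm{Derlog}_S\langle f\rangle$ and $\mathcal{B}(I_f)$ onto $\mathrm{KD}$, it descends to an isomorphism on quotients
\[
\mathcal{H}(I_f) = \mathcal{Z}(I_f)/\mathcal{B}(I_f) \;\xrightarrow{\ \sim\ }\; \mathrm{Derlog}_S\langle f\rangle/\mathrm{KD} = \mathcal{E}_f,
\]
which is exactly $\bar\pi$ up to the identification, proving (ii).

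The only genuinely delicate point is the bookkeeping in the second family of Koszul boundaries: one must be careful that the boundary $f\,e_i - f_i\,e_0$ (rather than, say, $f_i\,e_0 - f\,e_i$) is what appears, and that after forgetting the $e_0$-coordinate one lands on $\pm f\,\partial/\partial x_i$, which generates the same submodule regardless of sign. I expect this to be routine once the Koszul differential $\partial_2\colon \bigwedge^2 S^{n+1}\to S^{n+1}$ is written out explicitly on basis vectors $e_a\wedge e_b\mapsto \mathbf{f}_b\,e_a - \mathbf{f}_a\,e_b$; the remaining verifications (well-definedness, $S$-linearity, compatibility with passing to quotients) are formal. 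No multiplicity theory or anything from the earlier sections is needed here — this lemma is purely a matter of matching up definitions.
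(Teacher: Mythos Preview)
Your proof is correct and follows essentially the same approach as the paper's: establish injectivity of $\pi$ via the free basis $\{\partial/\partial x_i\}$ and the domain $S$, surjectivity by choosing the preimage $(-b,a_1,\ldots,a_n)$, and then identify $\pi(\mathcal{B}(I_f))$ with the submodule $\mathrm{KD}$ of Koszul derivations to descend to quotients. You are in fact slightly more explicit than the paper in verifying that $\pi$ lands in $\mathrm{Derlog}_S\langle f\rangle$ and in writing out the surjectivity step, but the argument is the same.
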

\begin{proof} (i)
	Injectivity: if $\pi$ vanishes at $(a_0, \dots, a_n)$ then $\sum_{i=1}^{n} a_i \frac{\partial}{\partial x_i} = 0$, hence $a_1 = \dots = a_n = 0$ since the partials are a free basis of  $\operatorname{Der}_k(S)$. On the other hand, since $(a_0, \dots, a_n) \in \mathcal{Z}(I_f)$, then  $a_0 f = 0$, and hence $a_0=0$ as well.
	
	Surjectivity is equally immediate, and left to the reader.

\smallskip
	
	(ii)  By part (i), $\bar{\pi}$ is surjective.
	We show that the kernel of $\bar{\pi}$ is $\mathcal{B}(I_f)$.
	 By definition, it is evident that $\pi(\mathcal{B}(I_f)) \subseteq \ker(\bar{\pi})$. 
	 Now, a vector $(a_0, a_1, \ldots, a_n)\in \mathcal{Z}(I_f)$ belongs to $\ker(\bar{\pi})$ if and only if 
	$$\sum_{i=1}^n a_i \frac{\partial f}{\partial x_i} = \sum_{1 \leq i < j \leq n} s_{i,j}\left(f_i \frac{\partial}{\partial x_j} - f_j \frac{\partial}{\partial x_i}\right) + \sum_{i=1}^n t_i\left(f \frac{\partial}{\partial x_i}\right),$$
	for certain $s_{i,j},t_i$ in $S$.
	That is, 
	$$s_{i,j}\left(f_i \frac{\partial}{\partial x_j} - f_j \frac{\partial}{\partial x_i}\right) = \pi(0, \dots, 0, \underset{\substack{\uparrow \\ i}}{f_j}, 0, \dots, 0, \underset{\substack{\uparrow \\ j}}{f_i}, 0, \dots, 0)$$
	and
	$$t_i\left(f \frac{\partial}{\partial x_i}\right) = \pi(f_i, 0, \dots, 0, \underset{\substack{\uparrow \\ i}}{f}, 0, \dots, 0).$$
	Since $\pi$ is injective, this implies $\mathcal{B}(I_f) = \ker(\bar{\pi})$ as desired. 
\end{proof}

Determining the minimal number of generators of the module $\mathcal{E}_f$ remains pretty much an open question in the theory of holomorphic foliations, even for a small number of generators. For instance, in \cite{Camacho_et_al}, the authors investigate the number of generators required for vector fields over the affine circle, with a particular focus on the case where the coefficients are rational numbers.
\begin{Example}\rm
	Let $f=x^2+y^2-1\in \QQ[x,y]$. Then $\{f, f_x,f_y\}=\{x^2+y^2-1,2x,2y\}$.  A calculation with \cite{M2} gives that $Z(I_f)$ is generated by the columns of the matrix
	$$\left(\begin{matrix}
		-y & x^2-1 & xy\\
		x  & xy &  y^2-1\\
		0  & -2x & -2y
	\end{matrix}\right).
	$$
By Lemma~\ref{E_fisZ_f} (i), one has
$$\mathrm{Derlog}_S(f)=S \left( -y\frac{\partial}{\partial x} + x\frac{\partial}{\partial y}\right) \, + S \left((x^2-1)\frac{\partial}{\partial x}+xy\frac{\partial}{\partial y}\right)\, + S \left(xy\frac{\partial}{\partial x} +  (y^2-1)\frac{\partial}{\partial y}\right).
$$



Changing to the free basis of $\Omega_k(S)$, the $S$-module $E_f$ is generated by the elements
$$\omega_0:=xdx+ydy=(1/2) df,\, \omega_1:=xydx-(x^2-1)dy,\, \omega_2:=(y^2-1)dx-xydy,$$
thus giving $\omega_0\in \langle \omega_1,\, \omega_2\rangle.$
	Therefore, $E_f$ is actually two-generated. 
\end{Example}

\begin{Remark}\rm
	(1) $f$ is up to sign the determinant of the cofactor of the zero entry in the above matrix. In fact, a similar such expression holds  for $f=x_1^2+\cdots +x_n^2-1\in k[x_1,\ldots,x_n]$, for arbitrary $n$ and over any perfect ground field of characteristic $\neq 2$ (see \cite[A propedeutic example]{Aron-Warwick}). It is also a Saito determinantal expression of the free divisor $f$.
	
	(2) The above example goes against the expectation at the end of \cite[Example 2]{Camacho_et_al}.
\end{Remark}
Drawing upon item (ii) of the previous lemma affords a more encompassing formulation of the main result of \cite{Camacho_et_al}.
Here, borrowing from Definition~\ref{quasihomogeneous_redefined}, given a polynomial $f\in S=k[x_1,\ldots,x_n]$ and a singular point $p\in \mathbb{A}^n_k$ we say that the variety $V(f)$ has a {\em quasihomogeneous singularity at $p$} if $(J_f)_{\fp}=(I_f)_{\fp}$ in $S_{\fp}$.

\begin{Theorem}\label{E_fprincipal} 
	Let $k$ be a field of characteristic zero and $f\in S=k[x_1,\ldots,x_n]$. Let $p\in \mathbb{A}^n_k$ be a rational isolated singular point of $V(f)$ and let $\fp$ 
	denote its defining prime ideal.   Then 
	\begin{enumerate}[label=(\roman*)]
		\item[{\rm (i)}] $V(f)$ is smooth if and only if  $\mathcal{E}_f=0$.
		\item[{\rm (ii)}] $V(f)$has a quasihomogeneous singularity at $p$ if and only if $(\mathcal{E}_{f})_{\fp}$ is a cyclic $S_{\fp}$-module.
	\end{enumerate}
\end{Theorem}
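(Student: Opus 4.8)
The key tool is Lemma~\ref{E_fisZ_f}(ii), which gives an $S$-isomorphism $\mathcal{H}(I_f)\simeq \mathcal{E}_f$; since localization is exact, this yields $(\mathcal{H}(I_f))_{\fp}\simeq(\mathcal{E}_f)_{\fp}$ for the prime $\fp$ in question, and more to the point $\mathcal{H}(I_f)_{\fp}\simeq \mathcal{H}((I_f)_{\fp})$ because Koszul homology commutes with localization. So the statement is really about the first Koszul homology of the ideal $(I_f)_{\fp}$ in the regular local ring $S_{\fp}$, and the problem is now of exactly the same local nature as the material in Section~\ref{quasihomogeneous-all}, with $R$ replaced by $S_{\fp}$ and $f$ by its image. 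I would first record these reductions explicitly, noting that since $p$ is an \emph{isolated} singular point, $(I_f)_{\fp}$ is $\fp S_{\fp}$-primary, so $S_{\fp}/(I_f)_{\fp}$ is Artinian and all the length/finiteness arguments from Theorem~\ref{QuasiHomogeneous} and Theorem~\ref{DiferMT} carry over verbatim to $S_{\fp}$.

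\textbf{Part (i).} Here $V(f)$ smooth means $(I_f)_{\fq}=(1)$ for every prime $\fq\supseteq\langle f\rangle$, equivalently $J_f$ is the unit ideal on $V(f)$, equivalently $\mathcal{Z}(I_f)=S^{n+1}\cap(\text{relations})$ gives no nontrivial Koszul homology. More directly: $\mathcal{E}_f=0$ iff $\mathcal{H}(I_f)=0$ iff the Koszul complex $K(\mathbf{f};S)$ is acyclic in degree $1$, iff $\mathbf{f}=\{f,f_1,\ldots,f_n\}$ is (up to unit multiples and the fact that $f$ is redundant) a regular sequence locally everywhere, which since $\operatorname{codim} I_f \le n$ forces $I_f=(1)$, i.e. $V(f)$ is smooth. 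I would spell out the ``if'' direction by observing that $\mathcal{E}_f=0$ forces, at the singular point $p$, $\mathcal{H}((I_f)_{\fp})=0$, but $H_1$ of a Koszul complex on generators of a proper ideal of positive grade in a Cohen--Macaulay local ring is nonzero unless the ideal is generated by a regular sequence of the right length --- and $I_f$ needs $n+1$ generators unless it drops, which by Theorem~\ref{QuasiHomogeneous}/its proof can only happen when $f$ is quasihomogeneous; at a genuine singular point one then still has $H_1\neq 0$. Cleanest is: $p$ is a singular point of $V(f)$ iff $(I_f)_{\fp}\neq(1)$ iff, since $(I_f)_{\fp}$ has positive grade in the CM local ring $S_{\fp}$, the Koszul complex on its $n+1$ generators has $H_1\neq0$, using that $n+1>\operatorname{grade}=n$ (depth sensitivity of the Koszul complex, \cite[1.6.16 and around]{Herzog.B}).

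\textbf{Part (ii).} Work in $S_{\fp}$. By the exact sequence~\eqref{syzygetic} (valid over $S_{\fp}$ by the reduction above), $\mathcal{H}((I_f)_{\fp})=H_1$ fits in
\[
0 \longrightarrow \delta((I_f)_{\fp}) \longrightarrow H_1 \longrightarrow (S_{\fp}/(I_f)_{\fp})^{n+1} \longrightarrow (I_f)_{\fp}/(I_f)_{\fp}^2 \longrightarrow 0,
\]
and $H_1$ is the canonical module of the Artinian ring $S_{\fp}/(I_f)_{\fp}$. If the singularity at $p$ is quasihomogeneous, $(I_f)_{\fp}=(J_f)_{\fp}$ is generated by a regular sequence, so $\delta=0$ and $H_1$ is the canonical module of a complete intersection, hence cyclic; thus $(\mathcal{E}_f)_{\fp}\simeq H_1$ is cyclic. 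Conversely, if $(\mathcal{E}_f)_{\fp}\simeq H_1$ is cyclic, then $S_{\fp}/(I_f)_{\fp}$ has a cyclic canonical module, hence is Gorenstein; an Artinian Gorenstein quotient of $S_{\fp}$ that is an almost complete intersection (here $\nu\le n+1$ while $\operatorname{codim}=n$) must be a complete intersection by Kunz's theorem \cite{Kunz}, so $\nu((I_f)_{\fp})=n$ and then the argument of Theorem~\ref{QuasiHomogeneous}$(c)\Rightarrow(a)$ --- using Theorem~\ref{Tessier} over $S_{\fp}$ to get that $(J_f)_{\fp}$ is a reduction of $(I_f)_{\fp}$, hence equal by the multiplicity/length computation --- gives $(I_f)_{\fp}=(J_f)_{\fp}$, i.e. the singularity at $p$ is quasihomogeneous. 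The main obstacle to watch is making sure Theorem~\ref{Tessier} and the ``length equals multiplicity'' machinery genuinely apply to the localized ring $S_{\fp}$ rather than to a formal power series ring: $S_{\fp}$ is a CM (even regular) local ring with $(J_f)_{\fp},(I_f)_{\fp}$ both $\fp S_{\fp}$-primary, so the reduction statement of Theorem~\ref{Tessier} holds (it is insensitive to completion, or one passes to $\widehat{S_{\fp}}$ and descends the equality of ideals back down), and the rest is standard; I would state this carefully as the one nontrivial verification.
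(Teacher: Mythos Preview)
Your proposal is correct and follows essentially the same route as the paper: reduce via Lemma~\ref{E_fisZ_f}(ii) to the first Koszul homology of $I_f$, handle (i) by depth sensitivity (the $n+1$ generators cannot have $H_1=0$ at a point where $I_f$ is proper since $\dim S=n$), and handle (ii) by identifying $H_1$ with the canonical module, invoking Kunz, and then Theorem~\ref{QuasiHomogeneous}$(c)\Rightarrow(a)$ after passing to the completion $k[[x_1,\ldots,x_n]]$. The only cosmetic difference is in the ``only if'' direction of (ii): you deduce cyclicity of $H_1$ from the fact that the canonical module of a Gorenstein ring is cyclic, whereas the paper computes $H_1$ explicitly via the degeneration $K(f,f_1,\ldots,f_n)\simeq K(0,f_1,\ldots,f_n)$ as a mapping cone, obtaining $H_1\simeq S_{\fp}/(J_f)_{\fp}$ directly.
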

\begin{proof}
	(i)  The ``only if" part: $V(f)$ being smooth implies that $I_f=(1)$. Moreover,   the Koszul homology module $\mathcal{H}(I_f)$ is annihilated by  the ideal $I_f$, hence must vanish.  Lemma \ref{E_fisZ_f}(2)  then implies that $\mathcal{E}_f=0$.
	
	The ``if" part: Likewise, if otherwise  $I_f$ is a proper ideal of $R$, then by the assumption and  Lemma \ref{E_fisZ_f}(2),  the Koszul homology $\mathcal{H}(I_f)$  vanishes. Then, locally at some maximal ideal of $S$, the set $\{f,f_{x_1},\cdots,f_{x_n}\}$ is a  regular sequence. But this is an absurd as $\dim S=n$.
	\smallskip
	
	(ii) By part (i),  we may assume that $V(f)$ is non-smooth.
	
	The``if" part:  assume that $(\mathcal{E}_{f})_{\fp}$ is a cyclic $S_{\fp}$-module. Since $p$ is an isolated singularity, 
	the ideal $(I_f)_{\fp}\subset S_{\fp}$ has codimension $n$, and the ring $(S/I)_{\fp}$ is a Cohen-Macaulay (Artinian) local ring. 
	Since $\grade(I_f)_{\fp}=n$, the first Koszul homology module of $(I_f)_{\fp}$ is the canonical module of $(S/I_f)_{\fp}$.
	But, by assumption and Lemma \ref{E_fisZ_f}(2), the Koszul homology module $\mathcal{H}(I_f)_{\fp}$ is cyclic. Thus, the canonical  module of $(S/I_f)_{\fp}$ is a cyclic module, hence ${(S/I_f)_{\fp}}$ is a Cohen-Macaulay ring of type one. Such a ring must be Gorenstein, see \cite[Chapter 21]{EisenbudBook}. By a result of E. Kunz \cite{Kunz},   $(I_f)_{\fp}$ is forcefully a complete intersection generated by $n$-elements. 
	
	In order to apply Theorem \ref{QuasiHomogeneous}, we need to enlarge the ground ring to $\pot{k}{x_1,\ldots,x_n}$.
	For this we observe that, since $p$ is a rational point, by a change of coordinates we may assume that $\fp=\langle x_1,\ldots,x_n\rangle$.
	Consider the natural inclusions 
	$$k[x_1,\ldots,x_n] \hookrightarrow k[x_1,\ldots,x_n]_{\langle x_1,\ldots,x_n\rangle} \hookrightarrow \pot{k}{x_1,\ldots,x_n}.$$
	Let $I\subset \pot{k}{x_1,\ldots,x_n}$ stand for the extended ideal of $I_{\fp}$, still  $n$-generated. 
Applying Theorem~\ref{QuasiHomogeneous} -- or rather, the implication (c) $\Rightarrow$ (a) in that theorem --  we are through.
	
	The``only if" part:
	assume that $V(f)$ has a quasihomogeneous singularity at $p$. 
	By definition, after a change of coordinates and localization at $\fp$, we have  $f\in \langle f_{x_1},\ldots,f_{x_n} \rangle$ (over $S$ as well since $f$ is a polynomial, not just an arbitrary formal power series).
	Clearly, the Koszul complex degenerates  as $K(f,f_{x_1},\ldots,f_{x_n})\simeq K(0,f_{x_1},\ldots,f_{x_n})$, while the latter is a mapping cone over the Koszul complex $K(f_{x_1},\ldots,f_{x_n})$.
	Taking the respective homology in degree one yields $H_{1}(f,f_{x_1},\ldots,f_{x_n})\simeq H_{0}(f_{x_1},\ldots,f_{x_n})\simeq S/\langle f_{x_1},\ldots,f_{x_n}\rangle$,  a cyclic $S$-module.  The result now follows from Lemma \ref{E_fisZ_f}.
\end{proof}


\begin{Remark}\rm If one does not care about how quasi-homogeneity or other conditions (such as the ones in \cite{Camacho_et_al}) affect the choice of ``nice'' generators, then Lemma~\ref{E_fisZ_f} tells us an old story of getting best syzygy generators of an ideal, certainly a common pursue.
	When $n=2$, in our landscape, $\mathcal{Z}(I_f)$ is a free module everywhere locally on $k[x_1,x_2]$, hence is projective as an $k[x_1,x_2]$-module. But then Seshadri's  theorem \cite{Sesh} assures that $\mathcal{Z}(I_f)$ is a free module, necessarily of rank two.
	Thus, $E_f$ can always be generated by two elements. Alas, unfortunately, \cite{Sesh} does not tell us how to ``best'' choose such a pair of generators.
\end{Remark}
While the main criterion in Theorem~\ref{E_fprincipal} is of local nature, its next corollary provides a global criterion for quasihomogeneity.

\begin{Corollary}\label{Corolaryfinal} Let $k$ be an algebraically closed field of characteristic zero and let $f\in S=k[x_1,\ldots,x_n]$.  Assume that the variety V$(f)\subseteq  \mathbb{A}^n_k$  is singular, with only isolated singularities. The following are equivalent$:$
	\begin{itemize}
		\item[\rm (i)] V$(f)$ has a quasihomogeneous singularity at $p$ for any singular point $p$ in V$(f)$.
		\item[\rm (ii)] $\mathcal{E}_{f}$ is a cyclic $S$-module$;$ more precisely,  the module $E_f=\mathrm{Derlog}_S(f) $ is generated by exactly one additional derivation besides the Koszul derivations.
		\item[\rm (iii)] $I_f=J_f$.
	\end{itemize}
\end{Corollary}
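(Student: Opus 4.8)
The plan is to prove the three equivalences by exploiting the fact that $\mathcal{E}_f$ is a global module whose localizations we already understand from Theorem~\ref{E_fprincipal}, together with the ideal-theoretic translation $I_f=J_f$. First I would establish the equivalence (i) $\Leftrightarrow$ (iii). Since $V(f)$ has only isolated singularities, the ideal $I_f$ is either the unit ideal (at smooth points) or primary to a maximal ideal of $S$ (at each singular point), and $J_f\subseteq I_f$ always. The inclusion $J_f\subseteq I_f$ is an equality globally if and only if it is an equality after localizing at every maximal ideal of $S$; at a smooth point both localize to the unit ideal, and at a singular point $p$ with defining (maximal) ideal $\fp$ the condition $(J_f)_{\fp}=(I_f)_{\fp}$ is precisely the definition we adopted for ``$V(f)$ has a quasihomogeneous singularity at $p$.'' Here I would use that $k$ algebraically closed makes every singular point rational, so the earlier local notion applies verbatim. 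This gives (i) $\Leftrightarrow$ (iii) with essentially no computation.

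Next I would handle (i) $\Leftrightarrow$ (ii). The implication (i) $\Rightarrow$ (ii): by Theorem~\ref{E_fprincipal}(ii), the hypothesis in (i) gives that $(\mathcal{E}_f)_{\fp}$ is cyclic for every singular point $p$, and by Theorem~\ref{E_fprincipal}(i) $(\mathcal{E}_f)_{\fq}=0$ at every smooth (hence every non-maximal relevant) prime; since $\mathcal{E}_f=\mathcal{H}(I_f)$ by Lemma~\ref{E_fisZ_f}(ii) is a finitely generated $S$-module annihilated by $I_f$, it is supported only on the finite set of singular points, so it is a finite-length module, and a finite-length module that is locally cyclic at each point of its support is globally cyclic (one can, e.g., lift a generator of $(\mathcal{E}_f)_{\fp_1}\oplus\cdots\oplus(\mathcal{E}_f)_{\fp_s}$ over the semilocal ring, or invoke that $\mathcal{E}_f\cong\bigoplus_i (\mathcal{E}_f)_{\fp_i}$ by the Chinese Remainder decomposition of the Artinian module). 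For the converse (ii) $\Rightarrow$ (i), if $\mathcal{E}_f$ is cyclic over $S$ then each localization $(\mathcal{E}_f)_{\fp}$ is cyclic over $S_{\fp}$, and Theorem~\ref{E_fprincipal}(ii) delivers quasihomogeneity at each $p$. The ``more precisely'' clause in (ii) is just the unwinding of $\mathcal{E}_f=\mathrm{Derlog}_S(f)/\text{KD}$ being cyclic, using Lemma~\ref{E_fisZ_f}(i) to identify $\mathrm{Derlog}_S(f)$ with $\mathcal{Z}(I_f)$ and the Koszul derivations with $\mathcal{B}(I_f)$.

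The main obstacle I anticipate is the bookkeeping in the step ``locally cyclic finite-length module $\Rightarrow$ globally cyclic'': one must be careful that $\mathcal{E}_f$ really is of finite length (this uses that the singularities are isolated, so $\operatorname{Supp}(S/I_f)$ is a finite set of closed points and $\mathcal{E}_f=\mathcal{H}(I_f)$ is killed by $I_f$), and then that an Artinian module over $S$ is the direct sum of its localizations at the maximal ideals in its support, after which cyclicity is additive-compatible via a single element with the right image in each summand. A secondary subtlety is making sure the passage between the polynomial ring $S$ and the completions used in Theorem~\ref{E_fprincipal} has already been absorbed into that theorem's statement, so that here we only quote it. Apart from that, everything reduces to Lemma~\ref{E_fisZ_f}, Theorem~\ref{E_fprincipal}, and the elementary local-global principle for ideal equality.
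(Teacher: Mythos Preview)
Your proposal is correct and follows essentially the same architecture as the paper's proof: reduce to the local statement of Theorem~\ref{E_fprincipal}, and use that $\mathcal{E}_f=\mathcal{H}(I_f)$ is supported on the finite singular locus. The one genuine difference lies in the step (i) $\Rightarrow$ (ii). You argue that $\mathcal{E}_f$ has finite length, hence splits as the direct sum of its localizations at the (pairwise distinct, maximal) primes in its support, and then invoke the Chinese Remainder Theorem to conclude that a direct sum of cyclic modules with pairwise comaximal annihilators is again cyclic. The paper instead quotes Forster's theorem \cite[Theorem 5.7]{Matsumura}, which bounds $\nu(\mathcal{E}_f)$ by $\sup\{\nu((\mathcal{E}_f)_{\fp})+\dim(S/\fp)\}$ over $\fp\in\operatorname{Supp}(\mathcal{E}_f)$; since each such $\fp$ is maximal this gives $\nu(\mathcal{E}_f)\leq 1$. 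Your route is more elementary and self-contained (no external generator-bound theorem needed), while the paper's route is a one-line citation that would continue to work in situations where the support has positive dimension but the local number of generators plus dimension is still controlled. A minor organizational difference: you establish (i) $\Leftrightarrow$ (iii) first by a direct local--global argument on the inclusion $J_f\subseteq I_f$, whereas the paper cycles (i) $\Rightarrow$ (ii) $\Rightarrow$ (iii) $\Rightarrow$ (i); both are fine.
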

\begin{proof} (i) $\Rightarrow$ (ii) Since V$(f)$ has only points as singularities, codim$(J_f)=n$. Hence, any prime $\fp\supseteq I_f$ defines a singular point of V$(f)$. Theorem \ref{E_fprincipal} assures that the statement in (i) implies $\nu_{S_{\fp}}(\mathcal{E}_f)_{\fp}\leq 1$ for any prime ideal $\fp\in {\rm Supp}(\mathcal{E}_f)={\rm V}(I_f)$. We now apply Forster's Theorem \cite[Theorem 5.7]{Matsumura}, stating that $\nu(\mathcal{E}_f)\leq \sup\{\nu(\mathcal{E}_f)_{\fp}+\dim(S/\fp)\,|\, \fp \in {\rm Supp}(\mathcal{E}_f)\}$=1. Since V$(f)$ is not smooth, Theorem \ref{E_fprincipal}(i) again implies that $1\leq \nu(\mathcal{E}_f)$. Then $\nu(\mathcal{E}_f)=1$.
	
	(ii)$\Rightarrow$ (iii) Since $J_f\subseteq I_f$, it is enough to show that the equality holds locally at all associated primes of $J_f$. Since $J_f$ has codimension $n$ and $k$ is algebrically closed, any associated prime $\fp$ of $J_f$ is of the form $\fp=(x_1-p_1,\ldots,x_n-p_n)$ for some $p_i\in k$.  Clearly, by assumption, in particular $(\mathcal{E}_f)_{\fp}$ is a cyclic $S_{\fp}$-module. Hence Theorem  \ref{E_fprincipal}(ii), assures that $(J_f)_{\fp}= (I_f)_{\fp}$ as desired. 
	
	(iii) $\Rightarrow$ (i) 
	This follows by localization and the definition. 
\end{proof}

\end{document}